\numberwithin{equation}{section}
\renewcommand{\MR}[1]{} 
\theoremstyle{plain}
\newtheorem{theorem}{Theorem}[section]
\newtheorem*{theorem*}{Theorem}
\newtheorem{corollary}[theorem]{Corollary}
\newtheorem{lemma}[theorem]{Lemma}
\theoremstyle{definition}
\newtheorem{definition}[theorem]{Definition}
\newtheorem{remark}[theorem]{Remark}
\newcommand{\R}{\mathbb{R}}
\newcommand{\N}{\mathbb{N}}
\newcommand{\distr}{\mathscr{D}}
\newcommand{\mm}{\mathsf{m}}
\newcommand{\dd}{{\mathsf d}}
\newcommand{\ww}{{\mathsf W}}
\newcommand{\cd}{\mathsf{CD}}
\newcommand{\rcd}{\mathsf{RCD}}
\newcommand{\mcp}{\mathsf{MCP}}
\newcommand{\be}{\mathsf{BE}}
\newcommand{\bm}{\mathsf{BM}}
\newcommand{\vfam}{\mathscr{F}}
\newcommand{\eps}{\varepsilon}
\newcommand{\prob}{{\mathscr P}}
\newcommand{\ent}{{\mathsf{Ent}}}
\newcommand{\slope}{{\mathsf D}}
\newcommand{\numb}{L} 	
\DeclareMathOperator{\vol}{vol}
\DeclareMathOperator{\spn}{span}
\DeclareMathOperator{\supp}{supp}
\DeclareMathOperator{\diverg}{div}
\DeclareMathOperator{\resto}{\mathscr R}
\DeclareMathOperator{\di}{d\!}
\DeclareMathOperator{\Lip}{Lip}
\DeclareMathOperator{\sob}{W}
\DeclareMathOperator{\sobh}{HW}
\DeclareMathOperator{\g}{g}
\DeclareMathOperator{\leb}{L}
\DeclareMathOperator{\dil}{dil}
\DeclarePairedDelimiter{\set}{\{}{\}}
\DeclarePairedDelimiter{\scalar}{<}{>}
\begin{document}

\title[Failure of CD conditions on sub-Riemannian manifolds]{Failure of curvature-dimension conditions on sub-Riemannian manifolds via tangent isometries}

\author[L.~Rizzi]{Luca Rizzi}
\address[L.~Rizzi]{Scuola Internazionale Superiore di Studi Avanzati (SISSA), via Bonomea~265, 34136 Trieste (TS), Italy}
\email{lrizzi@sissa.it}

\author[G.~Stefani]{Giorgio Stefani}
\address[G.~Stefani]{Scuola Internazionale Superiore di Studi Avanzati (SISSA), via Bonomea~265, 34136 Trieste (TS), Italy}
\email{gstefani@sissa.it {\normalfont or} giorgio.stefani.math@gmail.com}

\keywords{Sub-Riemannian manifold, $\cd(K,\infty)$ condition, Bakry--\'Emery inequality, infinitesimally Hilbertian, Grushin plane, privileged coordinates}

\subjclass[2020]{Primary 53C17. Secondary 54E45, 28A75}

\date{\today}

\begin{abstract}
We prove that, on any sub-Riemannian manifold endowed with a positive smooth measure, the Bakry--Émery inequality for the corresponding sub-Laplacian,
\begin{equation}
\frac{1}{2}\Delta(\|\nabla u\|^2) \geq \g(\nabla u,\nabla \Delta u) + K \|\nabla u \|^2,
\quad
K\in\R,
\end{equation}
implies the existence of enough Killing vector fields on the tangent cone to force the latter to be Euclidean at each point, yielding the failure of the curvature-dimension condition in full generality.
Our approach does not apply to non-strictly-positive measures. 
In fact, we prove that the weighted Grushin plane does not satisfy any curvature-dimension condition, but, nevertheless, does admit an a.e.\ pointwise version of the Bakry--Émery inequality.
As recently observed by Pan and Montgomery, one half of the weighted Grushin plane satisfies the $\rcd(0,N)$ condition, yielding a counterexample to gluing theorems in the $\rcd$ setting.
\end{abstract}

\maketitle

\section{Introduction and statements}

In the last twenty years, there has been an impressive effort in extending the concept of `Ricci curvature lower bound' to non-Riemannian structures, and even to general metric spaces equipped with a measure (metric-measure spaces, for short). 
We refer the reader to the ICM notes~\cite{Ambrosio-ICM} for a survey of this line of research.

There are two distinct points of view on the matter, traditionally known as the \emph{Lagrangian} and \emph{Eulerian} approaches, respectively.

The Lagrangian point of view is the one adopted by Lott--Villani and Sturm \cites{LV09,S06-I,S06-II}. 
In this formulation, Ricci curvature lower bounds are encoded by convexity-type inequalities for entropy functionals on the Wasserstein space.
Such inequalities are called \emph{curvature-dimension} conditions, $\cd(K,N)$ for short, where $K\in \R$ represents the lower bound on the curvature and $N\in [1,\infty]$ stands for an upper bound on the dimension.

The Eulerian point of view, instead, employs the metric-measure structure to define an energy form and, in turn, an associated diffusion operator. 
The notion of Ricci curvature lower bound is therefore encoded in the so-called \emph{Bakry--Émery inequality}, $\be(K,N)$ for short, for the diffusion operator, which can be expressed in terms of a suitable \emph{Gamma calculus}, see the monograph~\cite{BGL14}.

Thanks to several key contributions~\cites{AGS15,EKS15,AGS14-m,AGMR15}, the Lagrangian and the Eulerian approaches are now known to be essentially equivalent. In particular,
$\cd(K,N)$ always implies $\be(K,N)$ in \emph{infinitesimal Hilbertian} metric-measure spaces, as introduced in~\cite{G15}, while the converse implication requires further technical assumptions.

Such synthetic theory of curvature-dimension conditions, besides being consistent with the classical notions of Ricci curvature and dimension on smooth Riemannian manifolds, is stable under pointed-measure Gromov--Hausdorff convergence.
Furthermore, it yields a comprehensive approach for establishing all results typically associated with Ricci curvature lower bounds, like Poincaré, Sobolev, log-Sobolev and Gaussian isoperimetric inequalities, as well as Brunn--Minkowski, Bishop--Gromov and Bonnet--Myers inequalities.

\subsection{The sub-Riemannian framework}

Although the aforementioned synthetic cur\-va\-tu\-re-di\-men\-sion conditions embed a large variety of metric-measure spaces, a relevant and widely-studied class of smooth structures is left out---the family of \emph{sub-Riemmanian manifolds}.  
A sub-Riemannian structure is a natural generalization of a Riemannian one, in the sense that its distance is induced by a scalar product that is defined only on a smooth sub-bundle of the tangent bundle, whose rank possibly varies along the manifold. 
See the monographs~\cites{ABB20,R14,M02} for a detailed presentation.

The first result in this direction was obtained by Driver--Melcher~\cite{DM05}, who proved that an integrated version of the $\be(K,\infty)$, the so-called \emph{pointwise gradient estimate} for the heat flow, is false for the three-dimensional \emph{Heisenberg group}.

In~\cite{J09}, Juillet proved the failure of the $\cd(K,\infty)$ property for all Heisenberg groups (and even for the strictly related \emph{Grushin plane}, see~\cites{J-proceeding}).
Later, Juillet~\cite{J21} extended his result to any sub-Riemannian manifold endowed with a possibly rank-varying distribution of  rank \emph{strictly} smaller than the manifold's dimension, and with any positive smooth measure, by exploiting the notion of \emph{ample curves} introduced in \cite{ABR-variational}.
The idea of~\cites{J09,J21} is to construct a counterexample to the \emph{Brunn--Minkowski inequality}.

The `no-$\cd$ theorem' of~\cite{J09} was extended to all \emph{Carnot groups} by Ambrosio and the second-named author in~\cite{AS20}*{Prop.~3.6} with a completely different technique, namely, by exploiting the optimal version of the \emph{reverse Poincaré inequality} obtained in~\cite{BB16}.

In the case of sub-Riemannian manifolds endowed with an \emph{equiregular distribution} and a positive smooth measure, Huang--Sun~\cite{HS20} proved the failure of the $\cd(K,N)$ condition for all values of $K\in\R$ and $N\in(1,\infty)$ contradicting a bi-Lipschitz embedding result.

Very recently, in order to address the structures left out in~\cite{J21}, Magnabosco--Rossi~\cite{MR22} recently extended the `no-$\cd$ theorem' to  \emph{al\-most\--\-Rie\-man\-nian manifolds}~$M$ of dimension~$2$ or \emph{strongly regular}.  
The approach of~\cite{MR22}  relies on the localization technique developed by Cavalletti--Mondino~\cite{CM17} in metric-measure spaces.

To complete the picture, we mention that several replacements for the Lott--Sturm--Villani curvature-dimension property have been proposed and studied in the sub-Rie\-man\-nian framework in recent years.
Far from being complete, we refer the reader to~\cites{BMR22,BKS18,BKS19,BR19,BR20,M21} for an account on the Lagrangian approach, to~\cite{BG17} concerning the Eulerian one, and finally to~\cite{S22} for a first link between entropic inequalities and contraction properties of the heat flow in the special setting of metric-measure groups.

\subsection*{Main aim} 
At the present stage, a `no-$\cd$ theorem' for sub-Riemannian structures in full generality is missing, since the aforementioned approaches~\cites{DM05,HS20,J09,J21,AS20,MR22} either require the ambient space to satisfy some structural assumptions, or leave out the infinite dimensional case $N=\infty$.

The main aim of the present paper is to fill this gap by showing that (possibly rank-varying) sub-Riemannian manifolds do not satisfy any curvature bound in the sense of Lott--Sturm--Villani or Bakry--Émery when equipped with a positive smooth measure, i.e., a Radon measure whose density in local charts with respect to the Lebesgue measure is a strictly positive smooth function.

\subsection{Failure of the Bakry\texorpdfstring{--}{-}Émery inequality}

The starting point of our strategy is the weakest curvature-dimension condition, as we now define.

\begin{definition}[Bakry--Émery inequality] We say that a sub-Riemannian manifold $(M,\dd)$ endowed with a positive smooth measure $\mm$ satisfies the \emph{Bakry--Émery $\be(K,\infty)$ inequality}, for $K\in \R$, if 
\begin{equation}
\label{eq:be}
\frac12\,\Delta(\|\nabla u\|^2)
\ge 
\g(\nabla u, \nabla \Delta u)
+
K\|\nabla u\|^2 
\quad 
\text{for all}\ u \in C^\infty(M),
\end{equation}
where $\Delta$ is the corresponding sub-Laplacian, and $\nabla$ the sub-Riemannian gradient.
\end{definition}

Our first main result is the following rigidity property for sub-Riemannian structures supporting the Bakry--Émery inequality~\eqref{eq:be}.

\begin{theorem}[no-BE]
\label{res:no-BE}
Let $(M,\dd)$ be a complete sub-Riemannian manifold endowed with a positive smooth measure $\mm$.
If $(M,\dd,\mm)$ satisfies the $\be(K,\infty)$ inequality for some $K\in\R$, then $\mathrm{rank}\,\distr_x=\dim M$ at each $x\in M$, so that $(M,\dd)$ is Riemannian.
\end{theorem}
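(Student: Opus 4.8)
The plan is to argue by blow-up, reducing the global inequality~\eqref{eq:be} to a rigidity statement on the tangent cone at each point. Fix $x\in M$ and let $\widehat{M}_x$ denote the metric tangent cone at $x$, realized through privileged coordinates and the associated dilations $\delta_\lambda$ as the nilpotent approximation of $(M,\dd,\mm)$ at $x$; when the structure is equiregular this is a Carnot group equipped with its canonical sub-Laplacian and a Haar measure. First I would check that the Bakry--Émery inequality is stable under this procedure. Writing~\eqref{eq:be} for the rescaled functions $u\circ\delta_\lambda$ and tracking the homogeneity of each term---the leading terms $\frac12\,\Delta(\|\nabla u\|^2)$ and $\g(\nabla u,\nabla\Delta u)$ are homogeneous of degree $-4$ under $\delta_\lambda$, while the zero-order term $K\|\nabla u\|^2$ carries two fewer powers, being of degree $-2$---one sees that the curvature term is negligible in the limit $\lambda\to 0$. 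Hence $\be(K,\infty)$ on $(M,\dd,\mm)$ forces the tangent cone $\widehat{M}_x$ to satisfy $\be(0,\infty)$.

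The core of the argument is then to extract symmetries of $\widehat{M}_x$ from $\be(0,\infty)$. Here I would exploit the homogeneity and left-invariance of the tangent cone: the sub-Laplacian, the carré du champ $\Gamma(u)=\|\nabla u\|^2$, and the reference measure are all invariant under the translations of $\widehat{M}_x$, so that $\be(0,\infty)$ reduces, after evaluation at a base point, to the non-negativity of the quadratic form $\Gamma_2(u)=\frac12\,\Delta\Gamma(u)-\Gamma(u,\Delta u)$ on the space of jets of functions. Expanding $\Gamma_2$ via the sub-Riemannian Bochner identity produces, besides a manifestly non-negative horizontal-Hessian term, cross terms pairing horizontal derivatives against derivatives in the directions transverse to $\distr_x$. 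Testing the inequality against functions whose jets isolate these transverse directions---which is possible precisely when $\mathrm{rank}\,\distr_x<\dim M$---shows that $\Gamma_2$ fails to be non-negative unless the structure constants responsible for the cross terms vanish. The vanishing of these obstructions is exactly the statement that the flows of the transverse frame fields preserve both the metric and the measure, i.e.\ that they are Killing vector fields for $\widehat{M}_x$.

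Granted enough such Killing fields, I would invoke the rigidity of homogeneous sub-Riemannian structures: a nilpotent tangent cone admitting Killing fields that generate translations in every transverse direction must have abelian structure, and an abelian Carnot group is isometric to Euclidean space. Consequently $\widehat{M}_x$ is Euclidean of dimension $\dim M$, which can occur only if $\distr_x$ is the whole tangent space $T_xM$. Since $x$ is arbitrary, the distribution has full rank at every point and $(M,\dd)$ is Riemannian.

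The hard part will be the second step: carrying out the sub-Riemannian Bochner computation in privileged coordinates and isolating, in a coordinate-free and measure-sensitive way, the precise cross terms whose non-negativity is obstructed by a non-trivial bracket. One must handle simultaneously the non-integrability of $\distr$, the first-order drift contributed by the smooth measure $\mm$ to the sub-Laplacian, and, in the non-equiregular case, the subtleties of the nilpotent approximation. Establishing that the vanishing of these terms amounts to the full Killing property---rather than a weaker symmetry---is what turns the analytic inequality into the geometric rigidity needed to conclude.
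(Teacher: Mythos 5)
Your overall architecture (blow up to the nilpotent approximation, note that the curvature term scales away so the tangent cone satisfies $\be(0,\infty)$, extract symmetries, conclude commutativity and hence full rank) is the paper's, and your first step---including the degree count $-4$ versus $-2$---is correct. The gap is in the second and third steps. You claim that the vanishing of the cross terms in $\Gamma_2$ means that the \emph{transverse} frame fields are Killing, and that Killing fields generating translations in every transverse direction force the tangent cone to be abelian. That rigidity statement is false: in the Heisenberg group the vertical field $\partial_z$ generates a one-parameter group of isometries preserving the Haar measure, yet the group is not abelian. The symmetries one actually extracts from $\be(0,\infty)$ are \emph{horizontal}: testing the inequality with $u=\alpha+\gamma$, where $\alpha$ has weighted degree $1$ and $\gamma$ has weighted degree at least $3$, the lowest-order surviving term is $\sum_i \widehat X_i\alpha\,[\widehat X_i,\widehat\Delta](\gamma)$, which is linear in $\alpha$ and hence must vanish; since each $\widehat X_i\alpha$ is a constant and the commutator is homogeneous of degree $-3$, one gets $[\widehat\nabla\alpha,\widehat\Delta]=0$, i.e.\ the horizontal gradients of weighted-degree-one polynomials commute with the sub-Laplacian, preserve the principal symbol, and are therefore Killing. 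These span a subspace $\mathfrak i\subset\mathfrak g^{1}$ complementary to $\mathfrak h^{1}$, and it is the existence of such \emph{horizontal} symmetries that forces $\mathfrak g$ to be commutative (\cref{res:commutativity}), via $[\mathfrak i,\mathfrak g^1]\subset\mathfrak h^2$ and an induction on the grading. Your plan, as written, extracts the wrong symmetries and then relies on a rigidity principle that has a counterexample.

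Second, you effectively restrict to the equiregular case (``when the structure is equiregular this is a Carnot group''), deferring the non-equiregular case as a technicality to be sorted out later. It is not a technicality: the points where $\mathrm{rank}\,\distr_x<\dim M$ can be exactly the singular points (as in the Grushin plane), so the content of the theorem at those points requires treating the nilpotent approximation as the homogeneous space $G/H$ with $H=\exp\mathfrak h$ non-trivial; this is why the Killing fields must be shown to complement $\mathfrak h^1$ rather than simply to exist. An argument that only yields full rank at regular points proves full rank on a dense open set, which does not exclude almost-Riemannian structures and hence does not prove the statement.
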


The idea behind our proof of \cref{res:no-BE} is to show that the \emph{metric tangent cone} in the sense of Gromov~\cite{G81} at each point of $(M,\dd)$ is Euclidean.
This line of thought is somehow reminiscent of the deep structural result for $\rcd(K,N)$ spaces, with $K\in\R$ and $N\in(1,\infty)$, proved by Mondino--Naber~\cite{MN19}.
However, differently from~\cite{MN19}, \cref{res:no-BE} provides information about the metric tangent cone at \emph{each} point of the manifold. Showing that the distribution $\distr$ is Riemannian at \emph{almost every} point in fact would not be enough, as this would not rule out almost-Riemannian structures. 

Starting from~\eqref{eq:be}, we first blow-up the sub-Riemannian structure and pass to its metric-measure tangent cone, showing that \eqref{eq:be} is preserved with $K=0$.
Note that, in this blow-up procedure, the positivity of the density of $\mm$ is crucial, since otherwise the resulting metric tangent cone would be endowed with the null measure.

The resulting blown-up sub-Riemannian space is isometric to a homogeneous space of the form $G/H$, where $G=\exp\mathfrak g$ is the Carnot group associated to the underlying (finite-dimensional and stratified) Lie algebra $\mathfrak g$ of bracket-generating vector fields, and $H=\exp\mathfrak h$ is its subgroup corresponding to the Lie subalgebra $\mathfrak h$ of vector fields vanishing at the origin, see~\cite{B96}. Of course, the most difficult case is when $H$ is non-trivial, that is, the tangent cone is not a Carnot group. 

At this point, the key idea is to show that the Bakry--\'Emery inequality $\be(K,\infty)$ implies the existence of special isometries on the tangent cone.

\begin{definition}[Sub-Riemannian isometries]
Let $M$ be a sub-Riemannian manifold, with distribution $\distr$ and metric $\g$. A diffeomorphism $\phi:M \to M$ is an \emph{isometry} if
\begin{equation}\label{eq:point-dist-pres}
(\phi_*\distr)|_x = \distr_{\phi(x)} 
\quad 
\text{for all}\ x\in M,
\end{equation}
and, furthermore, $\phi_*$ is an orthogonal map with respect to $\g$. We say that a smooth vector field $V$ is \emph{Killing} if its flow $\phi_t^V$ is an isometry for all $t\in \R$.
\end{definition}

For precise definitions of $\mathfrak{g}$ and $\mathfrak{h}$ in the next statement, we refer to \cref{subsec:nilpotent_approx}.

\begin{theorem}[Existence of Killing fields]\label{res:space_i}
Let $(M,\dd)$ be a complete sub-Riemannian manifold equipped with a positive smooth measure $\mm$
If $(M,\dd,\mm)$ satisfies the $\be(K,\infty)$ inequality for some $K\in \R$, then, for the nilpotent approximation at any given point, there exists a vector space $\mathfrak{i}\subset \mathfrak{g}^{1}$ such that
\begin{equation}
\label{eq:space_i_complement}
\mathfrak{g}^{1} = \mathfrak{i} \oplus \mathfrak{h}^{1}
\end{equation}
and every $Y \in \mathfrak{i}$ is a Killing vector field.
\end{theorem}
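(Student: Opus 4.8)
The plan is to reduce everything to the metric-measure tangent cone and there convert the Bakry--Émery inequality into the Killing equations through a Bochner-type computation. First I would invoke the blow-up procedure sketched above: since the density of $\mm$ is smooth and strictly positive, rescaling the structure at a fixed point and passing to the limit produces the homogeneous metric-measure space $(N,\dd_N,\mm_N)$ with $N=G/H$, on which the inequality is inherited with constant $K=0$. It therefore suffices to exhibit the subspace $\mathfrak{i}\subset\mathfrak{g}^{1}$ of Killing fields on $N$ under the hypothesis $\be(0,\infty)$. At this stage I would also record the elementary but essential fact that the isotropy subalgebra $\mathfrak{h}$ acts by isometries fixing the origin, so that every element of $\mathfrak{h}^{1}$ is \emph{automatically} Killing; the content of the statement is thus the production of Killing fields in directions transverse to $\mathfrak{h}^{1}$.

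Next I would write out the quantity $\frac12\,\Delta(\|\nabla u\|^2)-\g(\nabla u,\nabla\Delta u)$ in the invariant horizontal frame $X_1,\dots,X_r$ attached to a basis of $\mathfrak{g}^{1}$ and the vertical frame $Z_1,\dots$ attached to $\mathfrak{g}^{2}\oplus\cdots$. The decisive structural feature is that the manifestly nonnegative part of this expression is the squared horizontal Hessian $\sum_{i,j}(X_iX_j u)^2$, which controls only horizontal-horizontal second derivatives, whereas the commutators $[X_k,X_i]=\sum_\ell c_{ki}^{\ell}Z_\ell$ feed in a term that is \emph{linear} in the mixed second derivatives $Z_\ell X_k u$, with coefficient $\sum_i c_{ki}^{\ell}X_i u$. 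Since on the nilpotent approximation the structure constants $c_{ki}^{\ell}$ are constant, this linear term is genuinely uncontrolled by the Hessian square, and it is exactly this term that the inequality must tame.

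The core of the argument is then to test $\be(0,\infty)$ against functions adapted to privileged coordinates---polynomials that decouple a prescribed mixed derivative $Z_\ell X_k u$ from $\|\nabla u\|^2$ and from the horizontal Hessian. Making such a derivative large of either sign while keeping the remaining quantities bounded forces the coefficient of the corresponding linear term to vanish. The subtle point, and precisely where a naive reading would erroneously yield full commutativity of $\mathfrak{g}$, is that on the quotient $G/H$ the mixed second derivatives are \emph{not} all independently realizable: the integrability relations among them, together with the presence of $\mathfrak{h}$, imply that only the combinations transverse to $\mathfrak{h}^{1}$ can be isolated. I would then show that the surviving vanishing conditions are equivalent to the Killing equations, namely $(\phi^V_t)_*\distr=\distr$ and orthogonality of $(\phi^V_t)_*$ with respect to $\g$, for $V=Y$ ranging over a complementary subspace $\mathfrak{i}$, and that a dimension count against $\mathfrak{h}^{1}$ yields the splitting $\mathfrak{g}^{1}=\mathfrak{i}\oplus\mathfrak{h}^{1}$.

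The main obstacle I anticipate is exactly this bookkeeping in the non-equiregular case where $H$ is nontrivial: determining which mixed second derivatives on $G/H$ are free and which are constrained, and checking that the free ones match the obstruction to the Killing equations for a \emph{full} complement of $\mathfrak{h}^{1}$---neither more nor less. A secondary technical issue is the contribution of the invariant measure $\mm_N$ to the first-order part of $\Delta$, which I would need to track carefully through the computation to ensure it does not interfere with the homogeneity used to isolate the linear terms.
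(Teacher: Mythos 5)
Your Step 1 (blow-up at the given point, so that $\be(K,\infty)$ passes to $\be(0,\infty)$ on the nilpotent approximation) matches the paper, and your instinct to test the inequality against polynomials adapted to privileged coordinates so as to force a linear term to vanish is the right one. But the proposal stops at precisely the two points that carry the proof, and the first of these you yourself flag as unresolved. The ``bookkeeping'' over which mixed second derivatives $Z_\ell X_k u$ are independently realizable on $G/H$ is in fact never needed. The paper's device is to take $u=\alpha+\gamma$ with $\alpha$ a homogeneous polynomial of weighted degree $1$ and $\gamma$ homogeneous of weighted degree at least $3$: then each $\widehat X_i\alpha$ is a constant and all higher derivatives of $\alpha$ vanish, so the lowest weighted-degree part of the $\be(0,\infty)$ inequality is exactly $\sum_i\widehat X_i\alpha\,[\widehat X_i,\widehat\Delta](\gamma)\le 0$, the Hessian-square term contributing only at strictly higher degree. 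Replacing $\alpha$ by $-\alpha$ upgrades this to an equality, and since $\bigl[\sum_i\widehat X_i\alpha\,\widehat X_i,\widehat\Delta\bigr]$ is a homogeneous differential operator of degree $-3$ annihilating every homogeneous polynomial of degree at least $3$, it vanishes identically. No case analysis of ``free'' versus ``constrained'' derivatives enters; the obstacle you name as the main one dissolves once the test functions are chosen this way.

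The second gap is the passage from the vanishing conditions to the Killing property, which you assert (``the surviving vanishing conditions are equivalent to the Killing equations'') without a mechanism. What the extraction yields is $[Y,\widehat\Delta]=0$ for $Y=\widehat\nabla\alpha$, and turning this into an isometry statement is a genuine step: the flow of $Y$ commutes with $\widehat\Delta$ on smooth functions, hence preserves its principal symbol, the sub-Riemannian Hamiltonian $\widehat H$; preservation of $\ker\widehat H|_{T_x^*\R^n}=\widehat\distr_x^\perp$ gives preservation of the distribution, and preservation of the quadratic form gives orthogonality of the differential. Without an argument of this kind the Killing conclusion is unsupported. Likewise, the splitting $\mathfrak g^1=\mathfrak i\oplus\mathfrak h^1$ requires constructing $\mathfrak i$ as the image of $\alpha\mapsto\widehat\nabla\alpha$ on weighted-degree-one polynomials, checking injectivity and $\mathfrak i\cap\mathfrak h=\{0\}$ (both easy because $\widehat X_i\alpha$ is constant), and the codimension count $\mathrm{codim}\,\mathfrak h^1=k_1$; your ``dimension count against $\mathfrak h^1$'' gestures at this but is not carried out. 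Finally, your side claim that every element of $\mathfrak h^1$ is automatically Killing is neither needed nor justified for the realization of $\mathfrak g$ as vector fields on $\R^n$ used here.
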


The existence of the space of isometries $\mathfrak{i}$ forces the Lie algebra $\mathfrak{g}$ to be commutative and of maximal rank, thus implying that the original manifold $(M,\dd)$ was in fact Riemannian.

\begin{theorem}[Killing implies commutativity]
\label{res:commutativity}
If there exists a subspace $\mathfrak{i}\subset \mathfrak{g}^{1}$ of Killing vector fields such that $\mathfrak{g}^1 = \mathfrak{i} \oplus \mathfrak{h}^1$, then $\mathfrak{g}$ is commutative.
\end{theorem}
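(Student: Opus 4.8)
The plan is to reduce the statement to the vanishing of the single bracket $[\mathfrak{g}^1,\mathfrak{g}^1]$. Since $\mathfrak{g}$ is stratified, one has $\mathfrak{g}^{k+1}=[\mathfrak{g}^1,\mathfrak{g}^k]$ for every $k$, so $[\mathfrak{g}^1,\mathfrak{g}^1]=0$ would propagate to $\mathfrak{g}^k=0$ for all $k\ge 2$ and force $\mathfrak{g}=\mathfrak{g}^1$ to be abelian. To exploit the Killing hypothesis I would first record its infinitesimal meaning: since the flow of $Y\in\mathfrak{i}$ is an isometry, it preserves the distribution, that is $[Y,\Gamma(\distr)]\subseteq\Gamma(\distr)$. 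The fields coming from $\mathfrak{g}^1$ are sections of $\distr$, whereas an element of $\mathfrak{g}^k$ with $k\ge 2$ defines a horizontal vector at the base point $o$ only if it lies in $\mathfrak{h}^k$ (a consequence of the grading together with $T_oM\cong\mathfrak{g}/\mathfrak{h}$). Evaluating $[Y,X]$ at $o$ for $X\in\mathfrak{g}^1$ then yields the crucial inclusion $[\mathfrak{i},\mathfrak{g}^1]\subseteq\mathfrak{h}^2$. The very same evaluation, applied to an arbitrary Killing field $W$ homogeneous of degree $m$, gives the graded refinement $[W,\mathfrak{g}^1]\subseteq\mathfrak{h}^{m+1}$, which I expect to be the technical workhorse.

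Combining $[\mathfrak{i},\mathfrak{g}^1]\subseteq\mathfrak{h}^2$ with $[\mathfrak{h}^1,\mathfrak{h}^1]\subseteq\mathfrak{h}^2$ (as $\mathfrak{h}$ is a graded subalgebra) and the splitting $\mathfrak{g}^1=\mathfrak{i}\oplus\mathfrak{h}^1$, one immediately gets $[\mathfrak{g}^1,\mathfrak{g}^1]\subseteq\mathfrak{h}^2$, hence $\mathfrak{g}^2=\mathfrak{h}^2\subseteq\mathfrak{h}$. To upgrade such containments to an actual vanishing I would use that the Killing fields form a Lie algebra: the subalgebra $\mathfrak{j}$ generated by $\mathfrak{i}$ consists of Killing fields and is graded, with $\mathfrak{j}^1=\mathfrak{i}$ and $\mathfrak{j}^k=[\mathfrak{i},\mathfrak{j}^{k-1}]$. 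Feeding the graded refinement into this recursion shows, by induction on $k$, that $\mathfrak{j}^k\subseteq\mathfrak{h}^k\subseteq\mathfrak{h}$ for every $k\ge 2$: indeed each $W\in\mathfrak{j}^{k-1}$ is a Killing field in $\mathfrak{g}^{k-1}$, so $[W,\mathfrak{i}]\subseteq[W,\mathfrak{g}^1]\subseteq\mathfrak{h}^k$. The goal is then to show that the whole derived algebra $[\mathfrak{g},\mathfrak{g}]=\bigoplus_{k\ge 2}\mathfrak{g}^k$ lies inside $\mathfrak{h}$, and to conclude by effectiveness: because the nilpotent approximation is realized by genuine vector fields, $\mathfrak{h}$ contains no nonzero ideal of $\mathfrak{g}$, so an ideal contained in $\mathfrak{h}$ must vanish, giving $[\mathfrak{g},\mathfrak{g}]=0$ and hence commutativity.

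The main obstacle I anticipate is precisely the interaction between $\mathfrak{i}$ and the isotropy in higher degree, namely the control of $[\mathfrak{i},\mathfrak{h}^k]$ for $k\ge 2$. The distribution-preservation property only constrains brackets taken against $\mathfrak{g}^1$, and the elements of $\mathfrak{h}$ are not themselves Killing, so a naive layer-by-layer induction stalls exactly here: one can bound the first bracket of $\mathfrak{g}^1$ against the Killing part, but the next bracket leaves the Killing locus and the graded refinement no longer applies. The delicate point, which I would isolate as a separate lemma, is to verify that the relevant brackets generate an \emph{ideal} of $\mathfrak{g}$ contained in $\mathfrak{h}$ rather than merely landing in $\mathfrak{h}$ termwise; the graded refinement $[W,\mathfrak{g}^1]\subseteq\mathfrak{h}^{m+1}$ for Killing $W$, together with the closedness of Killing fields under bracket and the grading of $\mathfrak{h}$, is what I expect to make this bookkeeping close and thereby trigger the effectiveness argument.
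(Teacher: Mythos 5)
Your first two steps match the paper's: the infinitesimal form of distribution preservation gives $[\mathfrak{i},\mathfrak{g}^1]\subseteq\mathfrak{h}^2$ (the bracket is homogeneous of degree $-2$, so its value at the origin lies in $\spn\{\partial_{z_j}:w_j=2\}$, while membership in $\widehat{\distr}|_0=\spn\{\partial_{z_j}:w_j=1\}$ forces it to vanish there), and together with $[\mathfrak{h}^1,\mathfrak{h}^1]\subseteq\mathfrak{h}^2$ and the splitting $\mathfrak{g}^1=\mathfrak{i}\oplus\mathfrak{h}^1$ this yields $\mathfrak{g}^2=\mathfrak{h}^2$. The gap is exactly the one you flag yourself: to propagate to higher layers you must control $[\mathfrak{i},\mathfrak{h}^j]$ for $j\ge2$, and your proposed workaround --- passing to the Killing subalgebra $\mathfrak{j}$ generated by $\mathfrak{i}$ and using the refinement $[W,\mathfrak{g}^1]\subseteq\mathfrak{h}^{m+1}$ for Killing $W$ --- cannot reach these brackets, because $\mathfrak{g}$ is generated by $\mathfrak{i}\oplus\mathfrak{h}^1$ and already $[\mathfrak{h}^1,\mathfrak{i}]\subseteq\mathfrak{h}^2$ need not consist of Killing fields; the next bracket $[\mathfrak{i},[\mathfrak{h}^1,\mathfrak{i}]]$ is of the form $[\mathfrak{i},\mathfrak{h}^2]$ and leaves the Killing locus for good. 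You state that you ``expect the bookkeeping to close,'' but no mechanism is supplied, so the induction does not in fact close as written.

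What you are missing is that no further geometric input is needed: the lemma $[\mathfrak{i},\mathfrak{h}^j]\subseteq\mathfrak{h}^{j+1}$ follows by a purely algebraic induction from the single case $j=1$ (which you already have, since $\mathfrak{h}^1\subseteq\mathfrak{g}^1$) via the Jacobi identity. Writing $\mathfrak{h}^{j+1}=[\mathfrak{h}^1,\mathfrak{h}^j]$ one gets
\begin{equation}
[\mathfrak i,\mathfrak h^{j+1}]
=[\mathfrak i,[\mathfrak h^1,\mathfrak h^j]]
\subseteq[\mathfrak h^1,[\mathfrak h^j,\mathfrak i]]+[\mathfrak h^j,[\mathfrak i,\mathfrak h^1]]
\subseteq[\mathfrak h^1,\mathfrak h^{j+1}]+[\mathfrak h^j,\mathfrak h^2]
\subseteq\mathfrak h^{j+2},
\end{equation}
which is precisely the paper's argument; combined with $\mathfrak g^{j+1}=[\mathfrak g^1,\mathfrak g^j]$ it gives $\mathfrak{g}^j=\mathfrak{h}^j$ for all $j\ge2$. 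Your proposed ending --- $[\mathfrak{g},\mathfrak{g}]=\bigoplus_{j\ge2}\mathfrak{g}^j$ is an ideal contained in $\mathfrak{h}$, and $\mathfrak{h}$ contains no nonzero ideal because the algebra is realized by genuine vector fields acting transitively on $\R^n$ --- is a legitimate alternative to the paper's conclusion (which instead observes that $\mathfrak{g}|_0=\mathfrak{g}^1|_0$ together with bracket generation forces all weights to equal $1$, hence $\mathfrak{g}^2=\set{0}$). But it is the intermediate inductive step, not the ending, that your proposal fails to supply.
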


\cref{res:commutativity} states that, if a Carnot group contains enough \emph{horizontal symmetries}, then it must be commutative. 
As it will be evident from its proof, \cref{res:commutativity} holds simply assuming that, for each $V\in \mathfrak{i}$,  the flow $\phi^V_t$ is pointwise distribution-preserving, namely it satisfies \eqref{eq:point-dist-pres}, without being necessarily isometries.
 
\subsection{Infinitesimal Hilbertianity}

The Bakry--\'Emery inequality $\be(K,\infty)$ in~\eqref{eq:be} is a consequence of the $\cd(K,\infty)$ condition as soon as the ambient metric-measure space is infinitesimal Hilbertian as defined in \cite{G15}.

Let $(X,\dd)$ be a complete separable metric space, $\mm$ be a locally bounded Borel measure, and $q\in [1,\infty)$. We let $|\slope u|_{w,q}\in\leb^q(X,\mm)$ be the \emph{minimal $q$-upper gradient} of a measurable function $u: X \to \R$, see~\cite{AGS13}*{Sec.~4.4}. 
We define the Banach space
\begin{equation}
\sob^{1,q}(X,\dd,\mm)
=
\set*{u\in\leb^q(X,\mm) : |\slope u|_{w,q} \in \leb^q(X,\mm)}
\end{equation}
with the norm
\begin{equation}
\label{eq:cheeger_norm}
\|u\|_{\sob^{1,q}(X,\dd,\mm)}=\left(\|u\|_{\leb^q(X,\mm)}^q+\||\slope u|_{w,q}\|_{\leb^q(X,\mm)}^q\right)^{1/q}.
\end{equation}

\begin{definition}[Infinitesimal Hilbertianity]
A metric measure space $(X,\dd,\mm)$ is \emph{infinitesimally Hilbertian} if $\sob^{1,2}(X,\dd,\mm)$ is a Hilbert space.
\end{definition}

The infinitesimal Hilbertianity of sub-Riemannian structures has been recently proved in~\cite{LLP22}, with respect to any Radon measure.
In particular, \cref{res:no-BE} immediately yields the following `no-$\cd$ theorem' for sub-Riemannian manifolds, thus extending all the aforementioned results~\cites{DM05,HS20,J09,J21,AS20,MR22}. 

\begin{corollary}[no-$\cd$]\label{res:no-CD}
Let $(M,\dd)$ be a complete sub-Riemannian manifold endowed with a positive smooth measure $\mm$.
If $(M,\dd,\mm)$ satisfies the $\cd(K,\infty)$ condition for some $K\in \R$, then $(M,\dd)$ is Riemannian.
\end{corollary}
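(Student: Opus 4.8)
The plan is to deduce the statement by chaining the available equivalence results with \cref{res:no-BE}; the whole point is to produce, out of the $\cd(K,\infty)$ hypothesis, the pointwise smooth inequality~\eqref{eq:be} required by the definition of $\be(K,\infty)$. First I would recall that, by the result of~\cite{LLP22}, any sub-Riemannian manifold $(M,\dd)$ equipped with a positive smooth (indeed, any Radon) measure $\mm$ is infinitesimally Hilbertian: the space $\sob^{1,2}(M,\dd,\mm)$ is a Hilbert space, so that the Cheeger energy $\ch$ is a quadratic form and generates a strongly local, symmetric Dirichlet form on $\leb^2(M,\mm)$.

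Next I would identify this abstract Dirichlet structure with the classical sub-Riemannian one. On a sub-Riemannian manifold with smooth measure, the minimal $2$-upper gradient of a function $u\in C^\infty(M)$ coincides $\mm$-a.e.\ with the pointwise norm $\|\nabla u\|$ of its horizontal gradient, so the Cheeger energy agrees with $\tfrac12\int_M \|\nabla u\|^2\,\di\mm$ and the associated generator is exactly the sub-Laplacian $\Delta$ appearing in~\eqref{eq:be}. Consequently, the carr\'e du champ operator and its iterate of the abstract $\Gamma$-calculus coincide, on smooth functions, with $\|\nabla u\|^2$ and with $\tfrac12\Delta(\|\nabla u\|^2)-\g(\nabla u,\nabla\Delta u)$, respectively.

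At this stage I would invoke the now-classical equivalence between the Lagrangian and Eulerian points of view~\cites{AGS15,EKS15,AGS14-m,AGMR15}: in an infinitesimally Hilbertian metric-measure space the $\cd(K,\infty)$ condition implies the Bakry--\'Emery condition $\be(K,\infty)$, at least in its weak integrated form, i.e.\ tested against nonnegative functions in the domain of the generator. The one genuine technical point is to upgrade this weak inequality to the strong pointwise form~\eqref{eq:be}, valid at every point for every $u\in C^\infty(M)$. Here I would exploit that $\Delta$ is hypoelliptic, so that $\Delta u$ and all the quantities entering~\eqref{eq:be} are smooth for $u\in C^\infty(M)$; testing the weak inequality against nonnegative smooth compactly supported functions and using the smoothness of the integrands then yields the pointwise inequality by a standard localization argument. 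I expect this upgrade---the matching of the abstract $\Gamma$-calculus with the smooth pointwise expressions, together with the removal of the test function---to be the main obstacle, since it relies on the regularity theory of the sub-Laplacian rather than on a purely formal manipulation.

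Finally, once~\eqref{eq:be} is established, \cref{res:no-BE} applies verbatim and gives that $\mathrm{rank}\,\distr_x=\dim M$ at every $x\in M$, that is, $(M,\dd)$ is Riemannian, which concludes the proof.
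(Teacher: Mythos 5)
Your proposal is correct and follows essentially the same route as the paper: infinitesimal Hilbertianity plus the identification of the minimal $2$-upper gradient with $\|\nabla u\|$ (this is precisely \cref{res:R}\eqref{item:sob=sob}, which the paper proves self-containedly rather than citing \cite{LLP22}), then the weak integrated Bakry--Émery inequality from the $\rcd(K,\infty)$ property via \cite{AGS14-m}, upgraded to the pointwise inequality \eqref{eq:be} by smoothness of the integrands, and finally \cref{res:no-BE}. The only cosmetic difference is your appeal to hypoellipticity, which is not needed since $\Delta u$ is already smooth for smooth $u$ by \eqref{eq:laplacian_representation}.
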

 
However, since the measure in \cref{res:no-CD} is positive and smooth, we can avoid to rely on the general result of~\cite{LLP22}, instead providing a simpler and self-contained proof of the infinitesimal Hilbertianity property. In particular, we prove the following result, which actually refines~\cite{LLP22}*{Th.~5.6} in the case of smooth measures.
In the following, $\sobh^{1,q}(M,\mm)$ denotes the sub-Riemannian Sobolev spaces (see \cref{sec:grad}).

\begin{theorem}[Infinitesimal Hilbertianity]\label{res:R}
Let $q\in(1,\infty)$. 
Let $(M,\dd)$ be a complete sub-Riemannian manifold equipped with a positive smooth measure $\mm$.
The following hold. 
\begin{enumerate}[label=(\roman*),ref=\roman*,itemsep=.5ex]

\item \label{item:sob=sob} 
$\sob^{1,q}(M,\dd,\mm)=\sobh^{1,q}(M,\mm)$, with 
$|\slope u|_{w,q}=\|\nabla u \|$ $\mm$-a.e.\ on $M$ for all $u\in\sob^{1,q}(M,\dd,\mm)$.
In particular, taking $q=2$, $(M,\dd,\mm)$ is infinitesimally Hilbertian.

\item\label{item:bochner}
If $(M,\dd,\mm)$ satisfies the $\mathsf{CD}(K,\infty)$ condition for some $K\in\R$, then the Bakry--Émery $\be(K,\infty)$ inequality \eqref{eq:be} holds on $M$.
\end{enumerate}
\end{theorem}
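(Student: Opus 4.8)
The plan is to establish the two assertions separately, deducing the Bochner inequality in \ref{item:bochner} from the Sobolev identification in \ref{item:sob=sob} together with the abstract equivalence between the Lagrangian and Eulerian pictures recalled in the introduction.

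For \ref{item:sob=sob} I would prove the two inclusions of the Sobolev spaces with the matching (in)equalities for the gradients. The inclusion $\sobh^{1,q}(M,\mm)\subseteq\sob^{1,q}(M,\dd,\mm)$ together with $|\slope u|_{w,q}\le\|\nabla u\|$ is the elementary one: if $u$ admits a horizontal gradient, then along any horizontal curve $\gamma$ the chain rule yields $|(u\circ\gamma)'|\le\|\nabla u\|(\gamma)\,|\dot\gamma|$, so that $\|\nabla u\|$ is an upper gradient of $u$ and, by minimality, $|\slope u|_{w,q}\le\|\nabla u\|$ holds $\mm$-a.e. The reverse inclusion is the substantial one. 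The key local fact is that, for a smooth function, the asymptotic Lipschitz constant (equivalently, the local slope) computed with respect to the Carnot--Carathéodory distance $\dd$ equals $\|\nabla u\|$; this follows from the ball-box theorem and the first-order expansion of $\dd$ in privileged coordinates. Combining this identity with the relaxation characterization of the minimal weak upper gradient of Ambrosio--Gigli--Savaré, whereby $|\slope u|_{w,q}$ is the $\leb^q$-lower semicontinuous envelope of the slope functional along Lipschitz approximations, and with a Meyers--Serrin density argument for $\sobh^{1,q}(M,\mm)$, I would obtain $\|\nabla u\|\le|\slope u|_{w,q}$ $\mm$-a.e., thus closing the identification. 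The infinitesimal Hilbertianity for $q=2$ is then immediate: since $\|\nabla u\|^2=\g(\nabla u,\nabla u)$ is induced by the pointwise inner product $\g$, the squared norm~\eqref{eq:cheeger_norm} obeys the parallelogram law, so $\sob^{1,2}(M,\dd,\mm)$ is a Hilbert space.

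For \ref{item:bochner} I would first invoke part~\ref{item:sob=sob}: being infinitesimally Hilbertian, the space $(M,\dd,\mm)$ upgrades $\cd(K,\infty)$ to $\rcd(K,\infty)$, and the abstract equivalences~\cites{AGS15,EKS15,AGS14-m,AGMR15} then furnish the Bakry--Émery inequality in its weak, integrated form. Concretely, the Cheeger energy coincides with the sub-Riemannian Dirichlet form, its generator coincides with the sub-Laplacian $\Delta$ on smooth functions, and the carré du champ is $\Gamma(u)=\|\nabla u\|^2$, so that the weak Bochner inequality reads $\tfrac12\int\|\nabla u\|^2\,\Delta\phi\,\di\mm\ge\int\phi\,\bigl(\g(\nabla u,\nabla\Delta u)+K\|\nabla u\|^2\bigr)\,\di\mm$ for every nonnegative test function $\phi$. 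The final step is to upgrade this to the pointwise inequality~\eqref{eq:be}: for smooth $u$ on a structure carrying a smooth measure, both $\|\nabla u\|^2$ and $\Delta(\|\nabla u\|^2)$ are smooth, hence integrating by parts against $\phi$ (using the $\mm$-symmetry of $\Delta$) moves the Laplacian onto $\|\nabla u\|^2$, and the arbitrariness of $\phi\ge0$ together with the continuity of both sides yields~\eqref{eq:be} at every point.

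The main obstacle will be the reverse inclusion in part~\ref{item:sob=sob}, namely controlling the minimal weak upper gradient from below by the horizontal gradient. The difficulty is genuinely sub-Riemannian: unlike the Riemannian case, the distance $\dd$ is only Hölder regular transversally to the distribution $\distr$, so identifying the metric slope with $\|\nabla u\|$ and ensuring that this identity survives the $\leb^q$-relaxation requires the fine local analysis afforded by privileged coordinates and the nilpotent approximation.
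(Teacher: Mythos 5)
Your proposal is correct in outline and follows the same overall strategy as the paper: identify $\sob^{1,q}(M,\dd,\mm)$ with $\sobh^{1,q}(M,\mm)$ by a two-sided approximation argument, then for part~(\ref{item:bochner}) run the chain $\cd(K,\infty)$ plus infinitesimal Hilbertianity $\Rightarrow$ weak Bochner inequality $\Rightarrow$ pointwise inequality~\eqref{eq:be} by integrating by parts against nonnegative test functions --- this last part is essentially verbatim the paper's argument via \cite{AGS14-m}. For part~(\ref{item:sob=sob}) the routes differ only cosmetically: where you invoke the relaxation characterization of $|\slope u|_{w,q}$ as the $\leb^q$-lower semicontinuous envelope of the slope, the paper uses the equivalent weak-upper-gradient characterization; and where you derive the key inequality $\|\nabla v\|\le \mathrm{lip}(v)$ from the ball--box theorem, the paper imports it from~\cite{HK00} (any upper gradient of a continuous function dominates the distributional horizontal gradient, \cref{res:ug}). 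Two of your steps are stated more strongly than they can be proved as written and need exactly the paper's intermediate lemmas. First, the ``elementary'' inclusion $\sobh^{1,q}\subset\sob^{1,q}$: for a general $u\in\sobh^{1,q}$, which is only an $\leb^q$-equivalence class, the chain rule along \emph{every} horizontal curve is meaningless; one must first approximate by smooth functions (Meyers--Serrin, \cref{res:ms}) and then use Fuglede's lemma to pass the upper-gradient inequality to the limit along $q$-a.e.\ curve (\cref{res:wug}). Second, in the reverse inclusion the relaxation produces Lipschitz, not smooth, approximants, so the identity ``slope $=\|\nabla u\|$ for smooth $u$'' does not apply to them directly; you need $\|\nabla v\|\le \mathrm{lip}(v)$ for merely Lipschitz $v$, followed by weak $\leb^q$-compactness of the horizontal derivatives and lower semicontinuity of the norm to close the identification, as in~\eqref{eq:liminf}. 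Both repairs are standard and do not change the architecture of your proof.
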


Note that \cref{res:R} holds for less regular measures, see \cref{rmk:moregeneralmeasures}. 

\begin{remark}[The case of a.e.\ smooth measures]\label{rmk:inf_hilb_local}
\cref{res:R} can be adapted also to the case of a Borel and locally finite measure $\mm$ which is smooth and positive only on $\overline{\Omega}$, where $\Omega \subset M$ is an open set with $\mm(\partial\Omega)=0$. In this case, we obtain $\sobh^{1,q}(\Omega,\mm)= \sob^{1,q}(\overline{\Omega},\dd,\mm)$, with $|\slope u|_{w,q}=\|\nabla u \|$  $\mm$-a.e.\ on $\Omega$ for all $u\in \sob^{1,q}(\overline{\Omega},\dd,\mm)$.
In particular, if $\mm$ is smooth and positive out of a closed set $\mathcal{Z}$, with $\mm(\mathcal{Z})=0$, an elementary approximation argument proves that $(M,\dd,\mm)$ is infinitesimally Hilbertian and, if $(M,\dd,\mm)$ satisfies the $\cd(K,\infty)$ condition for $K\in \R$, then the Bakry-Émery $\be(K,\infty)$ inequality \eqref{eq:be} holds on $M\setminus \mathcal{Z}$. This is the case, for example, of the Grushin planes and half-planes with weighted measures of \cref{sec:weigthedgrush}. The proof follows the same argument of the one of \cref{res:R}, exploiting the locality of the $q$-upper gradient, see for example~\cite{AGS13}*{Sec.~8.2} and~\cite{G15}*{Prop.~2.6}, and similar properties for the distributional derivative.
\end{remark}

\begin{remark}[A comment on Juillet's no-CD theorem] 
As we already mentioned, in~\cite{J21} Juillet proved that, if a sub-Riemannian distribution on an $n$-dimensional manifold has rank strictly smaller than $n$, then the corresponding metric measure space cannot satisfy the $\cd(K,\infty)$ condition. In other words, $\cd(K,\infty)$ implies that the sub-Riemannian distribution has maximal rank at some point. Actually, Juillet's argument is local, and thus $\cd(K,\infty)$ implies that the distribution must be of maximal rank on a dense set of points. Our method (\cref{res:no-CD}) shows that this is actually the case for \emph{all} points.
\end{remark}

\subsection{An alternative approach to the \texorpdfstring{`no-$\cd$ theorem'}{no-CD theorem}}
We mention an alternative proof of the `no-$\cd$ theorem' for almost-Riemannian structures (i.e., sub-Riemannian structures that are Riemannian outside a closed nowhere dense singular set). 
The strategy relies on the Gromov-Hausdorff continuity of the metric tangent at interior points of geodesics in $\rcd(K,N)$ spaces, with $N<\infty$, proved by Deng in~\cite{D22}, 

For example, consider the standard Grushin plane (introduced in \cref{sec:weigthedgrush}) equipped with a smooth positive measure. The curve $\gamma(t)=(t,0)$, $t\in\R$, is a geodesic between any two of its point. The metric tangent at $\gamma(t)$ is (isometric to) the Euclidean plane for every $t\ne0$, while it is (isometric to) the Grushin plane itself for $t=0$. 
However the Euclidean plane is not isometric to the Grushin plane, contradicting the continuity result.

This strategy has a few drawbacks. 
On the one hand, it relies on the (non-trivial) machinery developed in \cite{D22}.
Consequently, this argument does not work in the case \mbox{$N=\infty$}. 
On the other hand, the formalization of this strategy for general al\-most-Rie\-man\-ni\-an structures requires certain non-isometric-embedding results into Euclidean spaces, which we are able to prove only under the same assumptions of~\cite{MR22}.

\subsection{Weighted Grushin structures}\label{sec:weigthedgrush}

When the density of the smooth measure is allowed to vanish, the `no-$\cd$ theorem' breaks down.
In fact, in this situation, the following two interesting phenomena occur:

\begin{enumerate}[leftmargin=2.5em,rightmargin=1em,itemsep=1ex,topsep=1ex,label=(\Alph*),ref=\Alph*]

\item\label{item:be_no_cd}
the Bakry-\'Emery $\be(K,\infty)$ inequality no longer implies the $\cd(K,\infty)$ condition;

\item\label{item:boundary} 
there exist al\-most-Rie\-man\-nian structures with boundary satisfying the $\cd(0,N)$ condition for $N \in[1,\infty]$.
\end{enumerate}

We provide examples of both phenomena on the so-called \emph{weighted Grushin plane}. 
This is the sub-Riemannian structure on $\R^2$ induced by the family $\vfam =\{X,Y\}$, where
\begin{equation}\label{eq:grushin_frame}
X=\partial_x,
\quad
Y=x\,\partial_y,
\quad
(x,y)\in\R^2.
\end{equation}
The induced distribution $\distr=\spn\set*{X,Y}$ has maximal rank outside the singular region $S=\set*{x=0}$ and rank $1$ on $S$. 
Since $[X,Y]=\partial_y$ on $\R^2$, the resulting sub-Riemannian metric space $(\R^2,\dd)$ is Polish and geodesic. 
It is \emph{almost-Riemannian} in the sense that, out of $S$, the metric is locally equivalent to the Riemannian one given by the metric tensor
\begin{equation}\label{eq:grushin_rm}
\g=\di x\otimes\di x+\frac1{x^2}\,\di y\otimes\di y,
\quad
x\ne0.
\end{equation}
We endow the metric space $(\R^2,\dd)$ with the weighted Lebesgue measure
\begin{equation}
\label{eq:grushin_mm}
\mm_p=|x|^p\di x\di y,
\end{equation}
where $p\in\R$ is a parameter.
The choice $p=-1$ corresponds to the Riemannian density
\begin{equation}
\label{eq:grushin_vol_g}
\vol_{\g}=\frac{1}{|x|}\,\di x\,\di y,
\quad
x\ne0,
\end{equation}
so that 
\begin{equation}
\label{eq:grushin_wmeas_p}
\mm_p=e^{-V}\vol_{\g}, \quad V(x)=-(p+1)\log |x|,
\quad
x\ne0.
\end{equation}
We call the metric-measure space $\mathbb{G}_p=(\R^2,\dd,\mm_p)$ the \emph{($p$-)weighted Grushin plane}.

We can now state the following result, illustrating phenomenon~\eqref{item:be_no_cd}.

\begin{theorem}
\label{res:fullgrushin}
Let $p\in\R$ and let $\mathbb{G}_p=(\R^2,\dd,\mm_p)$ be the weighted Grushin plane.
\begin{enumerate}[label=(\roman*),ref=\roman*,itemsep=.5ex]

\item\label{res:juillet}
If $p\ge0$, then $\mathbb{G}_p$ does not satisfy the $\cd(K,\infty)$ property for all $K\in\R$.

\item\label{res:grushinbochnerae}
If $p\ge1$, then $\mathbb{G}_p$ satisfies the $\be(0,\infty)$ inequality \eqref{eq:be} almost everywhere. 
\end{enumerate}
\end{theorem}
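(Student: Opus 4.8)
The plan is to work entirely off the singular set $S=\set*{x=0}$, where the Grushin structure is the weighted Riemannian surface with metric $\g=\di x\otimes\di x+x^{-2}\,\di y\otimes\di y$ and measure $\mm_p=e^{-V}\vol_\g$, $V=-(p+1)\log|x|$. The single computation driving both parts is the $N=\infty$ Bakry--Émery tensor $\ric_V:=\ric_\g+\mathrm{Hess}_\g V$ of this surface. First I would note that a warped metric $\di x^2+f(x)^2\di y^2$ with $f=|x|^{-1}$ has Gaussian curvature $-f''/f=-2/x^2$, so $\ric_\g=-\tfrac{2}{x^2}\g$; then a direct computation in the frame $\set*{\partial_x,\,x\partial_y}$ gives $\mathrm{Hess}_\g V=\tfrac{p+1}{x^2}\g$. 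Adding the two yields the clean identity
\begin{equation}
\ric_V=\frac{p-1}{x^2}\,\g\qquad\text{on }\set*{x\ne0}.
\end{equation}

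For part~\eqref{res:grushinbochnerae} I would observe that off $S$ the sub-Laplacian $\Delta$ is the weighted (Witten) Laplacian $\Delta_V=\Delta_\g-\g(\nabla V,\nabla\,\cdot\,)$ of $\mm_p$, and $\|\nabla u\|^2$ is the Riemannian gradient norm. The weighted Bochner identity then reads
\begin{equation}
\tfrac12\Delta_V\|\nabla u\|^2-\g(\nabla u,\nabla\Delta_V u)=\|\mathrm{Hess}_\g u\|^2+\ric_V(\nabla u,\nabla u).
\end{equation}
Since $\ric_V=\tfrac{p-1}{x^2}\g\ge0$ for $p\ge1$ and $\|\mathrm{Hess}_\g u\|^2\ge0$, the right-hand side is nonnegative, which is exactly the $\be(0,\infty)$ inequality \eqref{eq:be} at every point of $\set*{x\ne0}$; as $S$ is Lebesgue-null, this is the asserted a.e.\ statement.

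The main work is part~\eqref{res:juillet}, and the hard case is $p\ge1$, where the local curvature is nonnegative and $\be(0,\infty)$ already holds, so the obstruction must come from the singular set. Here the plan is an \emph{entropy-concentration} argument. Fix $a,\delta>0$ and a small $\eps>0$, and let $\mu_0,\mu_1$ be the normalized restrictions of $\mm_p$ to the boxes $B_0=(-a-\eps,-a+\eps)\times(-\delta,\delta)$ and $B_1=(a-\eps,a+\eps)\times(-\delta,\delta)$, so $\ent(\mu_i)=-\log\mm_p(B_i)$. The first point is that the unique $W_2$-geodesic between $\mu_0$ and $\mu_1$ keeps $y$ fixed and transports the $x$-marginal monotonically: for $x_0<0<x_1$ the horizontal segment $\set*{y\equiv y_0}$ is the \emph{unique} minimizer from $(x_0,y_0)$ to $(x_1,y_0)$ (its length $x_1-x_0$ already saturates the bound from the $\partial_x$-component, with equality forcing $\dot y\equiv0$), and moving mass in $y$ only raises the cost. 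Invoking uniqueness of optimal maps for absolutely continuous data (Figalli--Rifford), this is \emph{the} geodesic that $\cd(K,\infty)$ is forced to use. Its midpoint map sends $(x_0,y_0)\mapsto\big(\tfrac{x_0+T(x_0)}{2},y_0\big)$ with $T(x_0)\in(a-\eps,a+\eps)$, so $\supp\mu_{1/2}\subseteq(-\eps,\eps)\times(-\delta,\delta)$, a thin slab straddling $S$. The second point is the measure estimate: since $t\mapsto t\log t$ is convex, any probability measure supported in a set $E$ has entropy at least $-\log\mm_p(E)$, whence
\begin{equation}
\ent(\mu_{1/2})-\tfrac12\big(\ent(\mu_0)+\ent(\mu_1)\big)\ge\log\frac{\mm_p(B_0)}{\mm_p\big((-\eps,\eps)\times(-\delta,\delta)\big)}\ge\log\frac{(p+1)(a-\eps)^p}{\eps^p}.
\end{equation}
For $p>0$ the right-hand side diverges as $\eps\to0$, while $W_2^2(\mu_0,\mu_1)\to4a^2$ stays bounded; hence for every fixed $K$ the convexity defect eventually exceeds $-\tfrac{K}{8}W_2^2(\mu_0,\mu_1)$, contradicting $\cd(K,\infty)$. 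This settles all $p\ge1$, and in fact all $p>0$.

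For the remaining value $p=0$ the box argument is vacuous, because $\mm_0$ is Lebesgue and the horizontal transport is volume-preserving; there I would argue locally instead, using that $\cd(K,\infty)$ on $\mathbb{G}_0$ restricts to the Bakry--Émery condition $\ric_V\ge K$ on the smooth weighted-Riemannian region $\set*{x\ne0}$, which is impossible since $\ric_V=-1/x^2\to-\infty$ as $x\to0$ (alternatively, one may simply invoke Juillet's no-$\cd$ theorem for the standard Grushin plane). I expect the genuinely delicate points to be the two claims in part~\eqref{res:juillet}: rigorously justifying that the optimal plan is the horizontal monotone one and that the minimizing geodesic is \emph{unique} — so that $\cd$ cannot escape through a different, non-collapsing geodesic — and controlling $\mu_{1/2}$ purely through the support bound. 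By contrast, the curvature identity and the a.e.\ Bakry--Émery statement are routine once $\ric_V=\tfrac{p-1}{x^2}\g$ is in hand.
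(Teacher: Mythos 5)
Your proposal is correct, and while part~(\ref{res:grushinbochnerae}) coincides with the paper's proof (same computation of $\mathrm{Ric}_{\infty,V}=\tfrac{p-1}{x^2}\g$ followed by the weighted Bochner identity off $\set{x=0}$), your treatment of part~(\ref{res:juillet}) for $p>0$ takes a genuinely different route. The paper violates the logarithmic Brunn--Minkowski inequality with \emph{fixed-size} boxes pushed to infinity, $A_i=[\pm\ell,\pm(\ell+1)]\times[0,1]$: the endpoint masses grow like $\ell^p$ while the midpoint set stays trapped in a bounded strip around $S$, so $\sqrt{\mm_p(A_0)\mm_p(A_1)}>\mm_p(A_{1/2})$ for large $\ell$. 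You instead keep the boxes at fixed distance and \emph{shrink} their width $\eps\to0$, so that the midpoint measure is crushed onto a slab of mass $O(\eps^{p+1})$ while the endpoints have mass of order $\eps$; the resulting entropy defect $\gtrsim p\log(1/\eps)$ diverges against a \emph{bounded} $\ww_2^2(\mu_0,\mu_1)\to4a^2$. Both arguments hinge on the same geometric input, namely that $\dd(a,b)\ge|x_a-x_b|$ with equality only along horizontal segments, which forces midpoints of transports across $S$ to collapse onto a thin neighbourhood of $\set{x=0}$; your uniqueness claim for the optimal plan is in fact elementary (equality in $\int\dd^2\,\di\pi\ge\int(x_1-x_0)^2\,\di\pi\ge\ww_2^2(\nu_0,\nu_1)$ forces $y_0=y_1$ $\pi$-a.e.\ and monotone matching of the atomless $x$-marginals, using the product structure of $\mu_0,\mu_1$), so no appeal to Figalli--Rifford is needed. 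A minor advantage of your version is that the bounded Wasserstein distance makes the contradiction with $\cd(K,\infty)$ immediate and uniform in $K\in\R$, including $K<0$, whereas in the paper's $\ell\to\infty$ argument the distance between the sets also diverges and the negative-$K$ case deserves a word. For $p=0$ both you and the paper ultimately fall back on Juillet; your alternative via restriction of $\cd(K,\infty)$ to the geodesically convex sets $\set{x\ge c}$ and the blow-up $\mathrm{Ric}_{\infty,V}=-x^{-2}\g\to-\infty$ is also viable but requires spelling out the (standard) restriction property.
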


To prove~\eqref{res:juillet}, we show that the corresponding Brunn--Minkowski inequality is violated.
In fact, the case $p=0$ is due to Juillet~\cites{J-proceeding}, while the case $p>0$ can be achieved via a simple argument which was pointed out to us by J.~Pan. Claim~\eqref{res:grushinbochnerae}, instead, is obtained by direct computations.

Somewhat surprisingly, the weighted Grushin \emph{half}-plane $\mathbb{G}_p^+$---obtained by restricting the metric-measure structure of $\mathbb{G}_p$ to the (closed) half-plane $[0,\infty)\times\R$---does satisfy the $\cd(0,N)$ condition for sufficiently large $N\in[1,\infty]$.
Precisely, we can prove the following result, illustrating phenomenon~\eqref{item:boundary}.

\begin{theorem}\label{res:halfgrushinCD}
Let $p\geq 1$. The weighted Grushin half-plane $\mathbb{G}_p^+ $ satisfies the $\cd(0,N)$ condition if and only if $N\geq N_p$, where $N_p \in (2,\infty]$ is given by
\begin{equation}
\label{eq:halfgrushin_Np}
N_p = \frac{(p+1)^2}{p-1}+2,
\end{equation}
with the convention that $N_1 = \infty$. 
Furthermore, $\mathbb{G}_p^+ $ is infinitesimally Hilbertian, and it is thus an $\rcd(0,N)$ space for $N\geq N_p$.
\end{theorem}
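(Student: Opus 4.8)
The plan is to exploit that $\mathbb{G}_p^+$ is \emph{almost-Riemannian}: off the singular line $S=\set{x=0}$, which is $\mm_p$-negligible, the structure is a genuine smooth weighted Riemannian surface $(\set{x>0},\g,\mm_p)$ with $\g=\di x\otimes\di x+x^{-2}\,\di y\otimes\di y$ and $\mm_p=e^{-V}\vol_\g$, $V=-(p+1)\log x$. On such a surface the relevant object is the weighted (Bakry--\'Emery) $N$-Ricci tensor $\ric_N=\ric_\g+\nabla^2V-\frac{1}{N-2}\,\di V\otimes\di V$, and, heuristically, $\cd(0,N)$ is equivalent to $\ric_N\ge0$. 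I would therefore split the proof into (i) a curvature computation pinning down the threshold $N_p$, and (ii) a geometric reduction showing that the condition on the half-plane is governed \emph{only} by the interior, so that the degeneration of $\g$ and $\mm_p$ along $S$ is invisible to optimal transport.

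For step (i) I would compute the three tensors directly in the frame $(\partial_x,\partial_y)$. For a metric $\di x^2+f(x)^2\,\di y^2$ the Gaussian curvature is $-f''/f$; with $f=x^{-1}$ this gives $\ric_\g=-2x^{-2}\g$. A short Christoffel-symbol computation then yields $\nabla^2V=(p+1)x^{-2}\g$ and $\di V\otimes\di V=(p+1)^2x^{-2}\,\di x\otimes\di x$, whence $\ric_\g+\nabla^2V=(p-1)x^{-2}\g$ and $\ric_N$ is diagonal with eigenvalues, relative to $\g$, equal to $x^{-2}\big[(p-1)-\tfrac{(p+1)^2}{N-2}\big]$ and $(p-1)x^{-2}$. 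Both are nonnegative exactly when $p\ge1$ and $N-2\ge\frac{(p+1)^2}{p-1}$, i.e.\ $N\ge N_p$ (the case $p=1$ forcing $N=\infty$). This simultaneously produces the candidate for necessity and sufficiency of $N\ge N_p$.

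Step (ii), the sufficiency, is where I expect the main obstacle, since $\cd(0,N)$ is a global statement about $W_2$-geodesics while the curvature bound holds only in the incomplete interior. The key geometric lemma I would establish is that every minimizing geodesic of $\mathbb{G}_p^+$ joining two points of $\set{x>0}$ stays in $\set{x>0}$ and never meets $S$. This is plausible because $y$-displacement is metrically expensive near $S$ (the coefficient $x^{-2}$ blows up), so minimizers between interior points bulge away from the boundary; concretely, the sub-Riemannian Hamiltonian $H=\tfrac12(p_x^2+x^2p_y^2)$ gives $\ddot x=-c^2x$ with $p_y=c$ constant, and one checks that the minimizing arc between same-side endpoints is the one on which $x$ stays positive. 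Granting this, since $\mm_p(S)=0$ every pair of absolutely continuous measures is concentrated on $\set{x>0}$, and so is the whole $W_2$-geodesic between them. The displacement convexity of the R\'enyi/Shannon entropy can then be verified by the Cordero-Erausquin--McCann--Schmuckenschl\"ager Jacobian estimate, which is local along the transport geodesics and requires only $\ric_N\ge0$ \emph{along} them; this is exactly what step (i) provides for $N\ge N_p$. Confining the geodesics to the interior (and controlling the optimal map up to $S$) is precisely the feature distinguishing the half-plane from the full plane $\mathbb{G}_p$, where symmetric geodesics crossing $S$ destroy any $\cd$ bound, so I regard it as the crux.

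Finally, for necessity I would argue that a global $\cd(0,N)$ bound localizes: testing displacement convexity with measures supported in a small ball around an interior point recovers the pointwise inequality $\ric_N\ge0$ there (the standard passage from the synthetic condition to the Bakry--\'Emery tensor on smooth weighted manifolds), and by step (i) this forces $N\ge N_p$. Infinitesimal Hilbertianity of $\mathbb{G}_p^+$ follows from \cref{res:R} in the form of \cref{rmk:inf_hilb_local}, because $\mm_p$ is smooth and positive off the $\mm_p$-null set $S$; combined with the $\cd(0,N)$ condition just established, this yields that $\mathbb{G}_p^+$ is an $\rcd(0,N)$ space for every $N\ge N_p$.
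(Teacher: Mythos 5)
Your proposal follows essentially the same route as the paper's proof: the identical computation of the weighted $N$-Bakry--Émery tensor on $\set{x>0}$ yielding the threshold $N_p$, the same key geometric lemma that minimizing geodesics between interior points bulge away from and never meet the singular line (the paper phrases this as geodesic convexity of $\set{x\ge\eps}$ in the full Grushin plane), the same localization argument for the necessity of $N\ge N_p$, and the same appeal to \cref{rmk:inf_hilb_local} for infinitesimal Hilbertianity. The only cosmetic difference is that the paper verifies displacement convexity via Sturm's argument together with an explicit approximation step for measures whose support touches $S$, where you invoke the Cordero-Erausquin--McCann--Schmuckenschläger Jacobian estimates; these are the same standard smooth-manifold verification.
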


While we were completing this work, Pan and Montgomery~\cite{P22} observed that the family of Ricci limits built in~\cites{PW22,DHPW22} include the weighted Grushin half-spaces presented above as special cases. 
Our construction and method of proof are more direct with respect to the approach of \cites{PW22,DHPW22}, and easily yield sharp dimensional bounds.

\subsection{Counterexample to gluing theorems}

We end this introduction with an interesting by-product of our analysis, in in connection with the so-called \emph{gluing theorems}.

Perelman's Doubling Theorem \cite{Perelman}*{Sect.~5.2} states that a finite dimensional Alexandrov space with a curvature lower bound can be doubled along its boundary yielding an Alexandrov space with \emph{same} curvature lower bound and dimension. This result has been extended by Petrunin~\cite{Petrunin}*{Thm.~2.1} to the gluing of Alexandrov spaces. 

It is interesting to understand whether these classical results hold true for general metric-measure spaces satisfying synthetic Ricci curvature lower bounds in the sense of Lott--Sturm--Villani. In \cite{KKS20-gluing}, the gluing theorem was proved for $\cd(K,N)$ spaces with Alexandrov curvature bounded from below (while it is false for $\mcp$ spaces, see~\cite{R18-gluing}).

Here we obtain that, in general, the assumption of Alexandrov curvature bounded from below cannot be removed from the results in~\cite{KKS20-gluing}. More precisely, \cref{res:fullgrushin,res:halfgrushinCD}, and the fact that the metric-measure double of the Grushin half-plane $\mathbb{G}_p^+$ is $\mathbb{G}_p$ (see \cite{R18-gluing}*{Prop.~6}) yield the following corollary.

\begin{corollary}[Counterexample to gluing in $\rcd$ spaces]\label{res:counterexample}
For all $N\geq 10$, there exists a geodesically convex $\rcd(0,N)$ metric-measure space with boundary such that its metric-measure double does not satisfy the $\cd(K,\infty)$ condition for any $K\in \R$.
\end{corollary}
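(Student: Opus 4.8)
The plan is to assemble \cref{res:counterexample} entirely from the pieces already established, treating it as a bookkeeping argument rather than a new computation. The key input is the identification, recorded in~\cite{R18-gluing}*{Prop.~6}, of the metric-measure double of the weighted Grushin half-plane $\mathbb{G}_p^+$ along its boundary $S=\{x=0\}$ with the full weighted Grushin plane $\mathbb{G}_p$. Granting this, the strategy reduces to choosing the parameter $p$ so that both halves of the dichotomy in phenomena~\eqref{item:be_no_cd} and~\eqref{item:boundary} hold simultaneously, and then unwinding the resulting threshold dimension.

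First I would fix $p\ge1$, which is the regime where \cref{res:halfgrushinCD} applies and guarantees that $\mathbb{G}_p^+$ is an $\rcd(0,N)$ space for every $N\ge N_p$, with $N_p=\frac{(p+1)^2}{p-1}+2$. The space $\mathbb{G}_p^+$ is geodesically convex (being a closed half-plane inside the geodesic space $\mathbb{G}_p$, with geodesics between boundary-side points staying on that side) and carries a boundary, so it has the qualitative features demanded by the statement. Since $p\ge1$ in particular gives $p\ge0$, \cref{res:fullgrushin}\eqref{res:juillet} applies to the double $\mathbb{G}_p$ and tells us that $\mathbb{G}_p$ fails $\cd(K,\infty)$ for every $K\in\R$. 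Thus for any such $p$ the double of an $\rcd(0,N_p)$ space with boundary violates all curvature-dimension conditions, which is exactly the desired contradiction to the gluing theorem of~\cite{KKS20-gluing} in the absence of an Alexandrov lower bound.

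The only remaining task is to match the quantitative claim, namely that the construction works for all $N\ge10$. Here I would analyze the function $p\mapsto N_p$ on $[1,\infty)$. As $p\downarrow1$ one has $N_p\to\infty$ (consistent with the convention $N_1=\infty$), and as $p\to\infty$ one again has $N_p\to\infty$, so $N_p$ attains a finite minimum at some interior $p_\ast>1$. A short optimization of $\frac{(p+1)^2}{p-1}$—differentiating and solving—locates this minimum; I expect the minimal value of $N_p$ to be exactly $10$, achieved at $p_\ast=3$, since $\frac{(3+1)^2}{3-1}+2=\frac{16}{2}+2=10$. Consequently, for every target $N\ge10$ there exists a choice of $p\ge1$ with $N_p\le N$, so that $\mathbb{G}_p^+$ is $\rcd(0,N)$ while its double $\mathbb{G}_p$ admits no $\cd(K,\infty)$ bound; taking $p=3$ already settles the threshold case $N=10$, and larger $N$ are handled a fortiori.

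I do not anticipate a genuine obstacle in the deduction itself, since every analytic fact is quoted from the preceding theorems. The one point deserving care is the verification that the threshold $10$ is correct and sharp, i.e.\ that $\min_{p\ge1}N_p=10$; this is a one-variable calculus exercise whose conclusion should be stated cleanly rather than belabored. A secondary point worth a sentence is confirming that the double indeed inherits the role of a \emph{counterexample}: the failure of $\cd(K,\infty)$ on $\mathbb{G}_p$ is the strongest possible negative statement, so in particular the double cannot satisfy $\cd(K,N)$ for any finite $N$ either, which is what makes the contrast with the $\rcd(0,N)$ property of the half-plane a bona fide obstruction to gluing.
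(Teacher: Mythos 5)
Your proposal is correct and is essentially the paper's own argument: the corollary is deduced by combining \cref{res:fullgrushin} and \cref{res:halfgrushinCD} with the identification of the metric-measure double of $\mathbb{G}_p^+$ with $\mathbb{G}_p$ from \cite{R18-gluing}*{Prop.~6}, and the threshold $N\ge 10$ is exactly $\min_{p\ge 1}N_p=N_3=10$. Your optimization of $p\mapsto N_p$ and the choice $p=3$ correctly fill in the only quantitative detail the paper leaves implicit.
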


In \cite{KKS20-gluing}*{Conj.~1.6}, the authors conjecture the validity of the gluing theorem for \emph{non-collapsed} $\rcd(K,N)$, with $N$ the Hausdorff dimension of the metric-measure space. 
As introduced in~\cite{Gigli-de-PhilippisJEP}, a non-collapsed $\rcd(K,N)$ space is an infinitesimally Hilbertian $\cd(K,N)$ space with $\mm = \mathscr{H}^N$, where $\mathscr{H}^N$ denotes the $N$-dimensional Hausdorff measure of $(X,\dd)$. Since the weighted half-Grushin spaces are indeed collapsed, \cref{res:counterexample} also shows that the non-collapsing assumption cannot be removed from~\cite{KKS20-gluing}*{Conj.~1.6}.

\subsection{Acknowledgments}

We wish to thank Michel Bonnefont for fruitful discussions and, in particular, for bringing some technical details in \cite{DM05} that inspired the strategy of the proof of \cref{res:no-BE} to our attention.

This work has received funding from the European Research Council (ERC) under the European Union’s Horizon 2020 research and innovation programme (grant agreement No.\ 945655) and the ANR grant `RAGE' (ANR-18-CE40-0012). 

The second-named author is  member of the Istituto Nazionale di Alta Matematica (INdAM), Gruppo Nazionale per l'Analisi Matematica, la Probabilità e le loro Applicazioni (GNAMPA), and is partially supported by the INdAM--GNAMPA 2022 Project \textit{Analisi geometrica in strutture subriemanniane}, codice CUP\_E55\-F22\-000\-270\-001, and by the INdAM--GNAMPA 2023 Project \textit{Problemi variazionali per funzionali e operatori non-locali}, codice CUP\_E53\-C22\-001\-930\-001. 

\section{Preliminaries}
\label{sec:preliminaries}

In this section, we introduce some notation and recall some results about sub-Ri\-e\-man\-nian manifolds and curvature-dimension conditions.

\subsection{Sub-Riemannian structures}
\label{subsec:sub-R}

For $\numb\in\N$, we let $\vfam=\set*{X_1,\dots,X_{\numb}}$ be a family of smooth vector fields globally defined on a smooth $n$-dimensional manifold~$M$, $n\geq 2$.
The (generalized) \emph{sub-Riemannian distribution} induced by the family $\vfam$ is defined by
\begin{equation}
\label{eq:def_distr}
\distr = \bigsqcup_{x\in M} \distr_x,
\quad 
\distr_x = \spn\set*{X_1|_x,\dots,X_{\numb}|_x} 
\subset 
T_x M,\quad
x\in M.
\end{equation}
Note that we do not require  the dimension of $\distr_x$ to be constant as $x\in M$ varies, that is, we may consider \emph{rank-varying} distributions. With a standard abuse of notation, we let  
\begin{equation}
\Gamma(\distr)
=
\text{$C^\infty$-module generated by $\vfam$}.
\end{equation}
Notice that, for any smooth vector field $V$, it holds
\begin{equation}
V\in \Gamma(\distr)
\implies
V_x \in \distr_x\ \text{for all}\ x\in M,
\end{equation}
but the converse is false in general. We let
\begin{equation}
\label{eq:def_norm}
\|V\|_x = \min\set*{|u| :u\in \R^{\numb}\ \text{such that}\ V = \sum_{i=1}^{\numb} u_i\, X_i|_x,\ X_i\in\vfam}
\end{equation}
whenever $V\in\distr$ and $x\in M$.
The \emph{norm} $\| \cdot\|_x$ induced by the family $\vfam$ satisfies the \emph{parallelogram law} and, consequently, it is induced by a \emph{scalar product}
\begin{equation}
\g_x \colon \distr_x \times \distr_x \to \R.
\end{equation}

An \emph{admissible curve} is a locally Lipschitz in charts path $\gamma\colon[0,1]\to M$ such that there exists a \emph{control} $u \in \leb^\infty([0,1];\R^{\numb})$ such that
\begin{equation}
\dot\gamma(t) = \sum_{i=1}^{\numb} u_i(t) X_i |_{\gamma(t)} \quad \text{for a.e.}\ t \in [0,1].
\end{equation}
The \emph{length} of an admissible curve $\gamma$ is defined via the norm~\eqref{eq:def_norm} as 
\begin{equation}
\mathrm{length}(\gamma)
=
\int_0^1\|\dot\gamma(t)\|_{\gamma(t)}\di t
\end{equation}
and the \emph{Carnot--Carathéodory} (or \emph{sub-Riemannian}) \emph{distance}  between $x,y\in M$ is
\begin{equation}
\label{eq:def_distance}
\dd(x,y)
=
\inf\set*{\mathrm{length}(\gamma) : \gamma\ \text{admissible with}\ \gamma(0)=x,\ \gamma(1)=y}.
\end{equation}  

We assume that the family $\vfam$ satisfies the \emph{bracket-generating condition} 
\begin{equation}
\label{eq:hormander}
T_xM=\set*{X|_x : X\in\mathrm{Lie}(\vfam)}
\quad
\text{for all}\ x\in M,
\end{equation}
where $\mathrm{Lie}(\vfam)$ is the smallest Lie subalgebra of vector fields on $M$ containing $\vfam$, namely,
\begin{equation}
\mathrm{Lie}(\vfam)
=
\spn\set*{[X_{i_1},\dots,[X_{i_{j-1}},X_{i_j}]] : X_{i_\ell}\in\vfam, \ j\in\N}.
\end{equation}
Under the assumption~\eqref{eq:hormander}, the Chow--Rashevskii Theorem implies that $\dd$ is a well-defined finite distance on $M$ inducing the same topology of the ambient manifold.

\subsection{Gradient, sub-Laplacian and Sobolev spaces}
\label{sec:grad}

The \emph{gradient} of a function $u\in C^\infty(M)$ is the unique vector field $\nabla u \in \Gamma(\distr)$ such that
\begin{equation}
\label{eq:def_gradient}
\g(\nabla u, V) = du(V)
\quad
\text{for all}\ V \in \Gamma(\distr).
\end{equation}
One can check that $\nabla u$ can be globally represented as
\begin{equation}
\label{eq:gradient_representation}
\nabla u = \sum_{i=1}^{\numb} X_iu\,X_i, 
\quad
\text{with} 
\quad 
\|\nabla u\|^2 = \sum_{i=1}^{\numb} (X_i u)^2,
\end{equation}
even if the family $\vfam$ is not linearly independent, see \cref{res:formule_utili} for a proof.

We equip the manifold~$M$ with a \emph{positive smooth} measure $\mm$. 
The \emph{sub-Laplacian} of a function $u\in C^\infty(M)$ is the unique function $\Delta u\in C^\infty(M)$ such that 
\begin{equation}
\label{eq:def_laplacian}
\int_M\g(\nabla u,\nabla v) \di \mm = -\int_M v\,\Delta u\di \mm
\end{equation}
for all $v\in C^\infty_c(M)$.
On can check that $\Delta u$ can be globally represented as 
\begin{equation}
\label{eq:laplacian_representation}
\Delta u = \sum_{i=1}^{\numb} \left(X_i^2u + X_iu\,\diverg_\mm (X_i)\right),
\end{equation}
see \cref{res:formule_utili} for a proof. 
In \eqref{eq:laplacian_representation}, $\mathrm{div}_\mm V$ is the divergence of the vector field $V$ computed with respect to $\mm$, that is,
\begin{equation}
\label{eq:div_m_def}
\int_M v\,\diverg_\mm(V)\di \mm=-\int_M \g(\nabla v,V)\di \mm \quad \text{for all}\ v \in C^\infty_c(M).
\end{equation}

For $q\in[1,\infty)$, we say that $u\in \leb^1_{\mathrm{loc}}(M,\mm)$ has $q$-integrable \emph{distributional $X_i$-derivative} if there exists a function $X_i u \in \leb^q(M,\mm)$ such that
\begin{equation}
\int_M v X_i u \di \mm = \int_M  u X_i^*v \di \mm\quad 
\text{for all}\ v \in C^\infty_c(M),
\end{equation}
where $X_i^*v = -X_i v - v \diverg_{\mm} (X_i)$ denotes the adjoint action of $X_i$. 
We thus let
\begin{equation}
\sobh^{1,q}(M,\mm)
=
\set*{u\in\leb^q(M,\mm) : X_iu\in\leb^q(M,\mm),\ i=1,\dots,\numb}
\end{equation}
be the usual \emph{horizontal $\sob^{1,q}$ Sobolev space} induced by the the family $\vfam$ and the measure~$\mm$ on~$M$, endowed with the natural norm
\begin{equation}
\|u\|_{\sobh^{1,q}(M,\mm)}
=
\left(\|u\|_{\leb^q(M,\mm)}^q+\|\nabla u\|_{\leb^q(M,\mm)}^q\right)^{1/q}
\end{equation}
for all $u\in\sobh^{1,q}(M,\mm)$, 
where $\nabla u =\displaystyle\sum_{i=1}^{\numb} X_i u\,X_i$ in accordance with~\eqref{eq:gradient_representation} and 
\begin{equation}
\|\nabla u\|_{\leb^q(M,\mm)}^q = \int_M \|\nabla u\|^q \di \mm.
\end{equation}

\subsection{Privileged coordinates}
\label{subsec:privileged_coord}

Following \cites{B96,Jeanbook}, we introduce \emph{privileged coordinates}, a fundamental tool in the description of the tangent cone of sub-Riemannian manifolds. 

Given a multi-index $I\in \{1,\dots,\numb\}^{\times i}$, $i\in\N$, we let $|I| = i$ be its \emph{length} and we set 
\begin{equation}
X_I = [X_{I_1},[\dots,[X_{I_{i-1}},X_{I_i}]]]].
\end{equation}
Accordingly, we define
\begin{equation}
\label{eq:def_distr_i}
\distr_x^i = \spn\{X_I|_x : |I|\leq i\}
\end{equation}
and 
\begin{equation}
k_i(x)=\dim \distr_x^i
\end{equation}
for all $x\in M$ and $i\in\N$.
In particular, $\distr_x^0=\set*{0}$ and $\distr_x^1=\distr_x$ as in~\eqref{eq:def_distr} for all $x\in M$.
The spaces defined in~\eqref{eq:def_distr_i} naturally yield the filtration
\begin{equation}\label{eq:filtration}
\set*{0}
=
\distr_x^0 
\subset
\distr_x^1
\subset 
\dots 
\subset \distr_x^{s(x)} =T_x M
\end{equation}
for all $x\in M$, where $s=s(x)\in\N$ is the \emph{step} of the sub-Riemannian structure at the point~$x$. 
We say that $x\in M$ is a \emph{regular} point if the dimension of each space~$\distr_y^i$ remains constant as $y\in M$ varies in an open neighborhood of $x$, otherwise $x$ is a \emph{singular} point.

\begin{definition}[Adapted and privileged coordinates]
\label{def:coord}
Let $o\in M$ and let $U\subset M$ be an open neighborhood of $o$.
We say that the local coordinates given by a diffeomorphism $z\colon U\to\R^n$ are \emph{adapted at $o$} if they are \emph{centered at $o$}, i.e.\ $z(o)=0$, and $\partial_{z_1}|_0,\dots,\partial_{z_{k_i}}|_0$ form a basis for $\distr_o^i$ in these coordinates for all $i=1,\dots,s(o)$. 
We say that the adapted coordinate $z_i$ has \emph{weight} $w_i=j$ if $\partial_{z_i}|_0\in\distr_o^j\setminus \distr_o^{j-1}$.
Furthermore, we say that the coordinates $z$ are  \emph{privileged at $o$} if they are adapted at $o$ and, in addition, $z_i(x) = O(\dd(x,o)^{w_i})$ for all $x\in U$ and $i=1,\dots,n$.
\end{definition}

Privileged coordinates exist in a neighborhood of any point, see~\cite{B96}*{Thm.~4.15}. 

\subsection{Nilpotent approximation}
\label{subsec:nilpotent_approx}

From now on, we fix a set of privileged coordinates  $z\colon U\to\R^n$ around a point $o\in M$ in the sense of \cref{def:coord}.
Without loss of generality, we identify the coordinate domain $U\subset M$ with $\R^n$ and the base point $o\in M$ with the origin $0\in\R^n$. 
Similarly, the vector fields in $\vfam$ defined on $U$ are identified with vector fields on $\R^n$, and the restriction of the sub-Riemannian distance $\dd$ to $U$ is identified with a distance function on $\R^n$, which is induced by the family $\vfam$, for which we keep the same notation. 

On $(\R^n,\vfam)$, we define a family of \emph{dilations}, for $\lambda\geq 0$, by letting
\begin{equation}
\dil_\lambda \colon \R^n\to \R^n,
\quad
\dil_\lambda(z_1,\dots,z_n) = (\lambda^{w_1}z_1,\dots,\lambda^{w_n}z_n)
\end{equation}
for all $z=(z_1,\dots,z_n)\in\R^n$, where the $w_i$'s are the weights given by \cref{def:coord}. 
We say that a differential operator $P$ is \emph{homogeneous of degree $-d \in \mathbb{Z}$} if
\begin{equation}
\label{eq:def_degree_vec}
P(f\circ\dil_\lambda)=\lambda^{-d}(Pf)\circ\dil_\lambda
\quad
\text{for all}\
\lambda>0\
\text{and}\
f\in C^\infty(\R^n).
\end{equation} 
Note that the monomial $z_i$ is homogeneous of degree $w_i$, while the vector field $\partial_{z_i}$ is homogeneous of degree $-w_i$, for $i=1,\dots,n$. As a consequence, the differential operator
\begin{equation}
z_1^{\mu_1} \cdot \dots \cdot z_n^{\mu_n} \frac{\partial^{|\nu|}}{\partial z^{\nu_1}_1 \cdots \partial z^{\nu_n}_n}, \qquad \nu_i,\mu_j\in \N \cup \{0\},
\end{equation}
is homogeneous of degree $\sum_{i=1}^n w_i (\mu_i-\nu_i)$. 
For more details, see \cite{B96}*{Sec.~5}.

We can now introduce the new family
\begin{equation}
\widehat\vfam
=
\set*{\widehat X_1,\dots,\widehat X_\numb}
\end{equation}
by defining
\begin{equation}
\label{eq:def_vec_approx}
\widehat X_i
= 
\lim_{\eps\to 0}
X_i^\eps,
\quad
X_i^\eps 
=
\eps
\,(\dil_{1/\eps})_* X_i,
\end{equation}
for all $i=1,\dots,\numb$, where $(\dil_{1/\eps})_*$ stands for the usual push-forward via the differential of the dilation map $\dil_{1/\eps}$,
see~\cite{B96}*{Sec.~5.3}.
The convergence in~\eqref{eq:def_vec_approx} can be actually made more precise, in the sense that 
\begin{equation}
X_i^\eps
=
\widehat X_i
+
\resto_i^\eps,
\quad
i=1,\dots,\numb, 
\end{equation}
where $\resto_i^\eps$ locally uniformly converges to zero as $\eps\to 0$, see~\cite{B96}*{Thm.~5.19}. 

The family $\widehat\vfam$ is a set of complete vector fields on $\R^n$, homogeneous of degree $-1$, with polynomial coefficients, and can be understood as the `principal part' of $\vfam$ upon blow-up by dilations. Since $\vfam$ satisfies the bracket-generating condition~\eqref{eq:hormander}, also the new family $\widehat\vfam$ is  bracket-generating at all points of $\R^n$, and thus induces a finite sub-Riemannian distance~$\widehat\dd$, see~\cite{B96}*{Prop.~5.17}. The resulting $n$-dimensional sub-Riemannian structure $(\R^n,\widehat\vfam\,)$ is called \emph{nilpotent approximation} of $(\R^n,\vfam)$ at $0\in\R^n$.

The family $\widehat\vfam=\set*{\widehat{X}_1,\dots,\widehat{X}_{\numb}}$ generates a finite-dimensional stratified Lie algebra 
\begin{equation}
\label{eq:g_def}
\mathfrak{g}
=
\mathrm{Lie}(\widehat{\vfam}\,) = \mathfrak{g}^{1}\oplus \dots \oplus \mathfrak{g}^{s}
\end{equation}
of step $s=s(0)\in\N$, where the grading is given by the degree of the vector fields, according to the definition in~\eqref{eq:def_degree_vec}, that is, 
the layer $\mathfrak{g}^{i}$ corresponds to vector fields homogeneous of degree $-i$ with respect to dilations, see~\cite{B96}*{Sec.~5.4}.
In particular, $\mathfrak{g}^{1} = \spn\set*{\widehat{X}_1,\dots,\widehat{X}_{\numb}}$, so that $\mathfrak{g}$ is generated by its first stratum, namely,
\begin{equation}
\label{eq:ggen}
\mathfrak g^{j+1}=[\mathfrak g^1,\mathfrak g^j], \qquad \forall j=1,\dots,s-1.
\quad
\end{equation}
Finally, define the Lie subalgebra of vector fields vanishing at $0$,
\begin{equation}
\label{eq:h_def}
\mathfrak{h} = \set*{\widehat X\in \mathfrak{g}: \widehat X|_0 = 0} =  \mathfrak{h}^{1}\oplus \dots \oplus \mathfrak{h}^{s},
\end{equation}
which inherits the grading from the one of $\mathfrak{g}$,
\begin{equation}
\label{eq:hgen}
\mathfrak h^{j+1}=[\mathfrak h^1,\mathfrak h^j], \qquad \forall j=1,\dots,s-1.
\end{equation}
It is a fundamental fact~\cite{B96}*{Thm.~5.21} that the nilpotent approximation $(\R^n,\widehat\vfam\,)$ is diffeomorphic to the homogeneous sub-Ri\-e\-man\-nian space $G/H$, where $G$ is the Carnot group $G=\exp\mathfrak g$ (explicitly realized as the subgroup of the flows of the vector fields of $\mathfrak{g}$ acting on $\R^n$ from the right) and $H=\exp\mathfrak h$ is the Carnot subgroup induced by $\mathfrak{h}$.

In particular, if $0\in\R^n$ is a regular point, then $H=\set*{0}$, and so the nilpotent approximation $(\R^n,\widehat\vfam\,)$ is diffeomorphic to the Carnot group $G$, see~\cite{B96}*{Prop.~5.22}.

Recall that the smooth measure $\mm$ on the original manifold $M$ can be identified with a smooth measure on $U\simeq \R^n$, for which we keep the same notation.
In particular, $\mm$ is absolutely continuous with respect to the $n$-dimensional Lebesgue measure $\mathscr L^n$ on~$\R^n$, with $\mm=\rho\,\mathscr L^n$ for some positive smooth function $\rho\colon\R^n\to(0,\infty)$. The corresponding blow-up measure on the nilpotent approximation is naturally given by
\begin{equation}
\label{eq:def_meas_blow-up}
\widehat\mm
=
\lim_{\eps\to 0}
\mm^\eps
=
\rho(0)\,\mathscr L^n,
\quad
\mm^\eps=\eps^Q\,(\dil_{1/\eps})_\#\mm,
\end{equation}
in the sense of weak$^*$ convergence of measures in $\R^n$, where 
\begin{equation}
Q=\sum_{i=1}^n i\,w_i\in\N
\end{equation}
is the so-called homogeneous dimension of $(\R^n,\widehat\vfam\,)$ and $(\dil_{1/\eps})_\#$ stands for the push-forward in the measure-theoretic sense via the dilation map $\dil_{1/\eps}$.
Consequently, without loss of generality, we can assume that $\rho(0)=1$, thus endowing $(\R^n,\widehat\vfam\,)$ with the $n$-dimensional Lebesgue measure. 
Notice that $\mathrm{div}_{\mathscr L^n}\widehat X_i=0$, for all $i=1,\dots,\numb$, since each $\widehat X_i$ is homogeneous of degree $-1$. 
Hence, by \eqref{eq:laplacian_representation}, the sub-Laplacian of a function $u\in C^\infty(\R^n)$ can be globally represented as 
\begin{equation}\label{eq:laplaciannilpotent}
\widehat\Delta u 
= 
\sum_{i=1}^{\numb} 
\widehat X_i^{\,2}u.
\end{equation}

It is worth noticing that the metric space $(\R^n,\widehat\dd\,)$ induced by the nilpotent approximation $(\R^n,\widehat\vfam\,)$ actually coincides with the \emph{metric tangent cone} at $o\in M$ of the metric space $(M,\dd)$ in the sense of Gromov~\cite{G81}, see~\cite{B96}*{Thm.~7.36} for the precise statement.
In fact, the sub-Riemmanian distance $\dd^\eps$ induced by the vector fields $X_i^\eps$, $i=1,\dots,\numb$, defined in~\eqref{eq:def_vec_approx} is uniformly converging to the distance $\widehat{\dd}$ on compact sets as $\eps\to 0$.

It is not difficult to check that the family $\set*{(\R^n,\dd^\eps,\mm^\eps,0)}_{\eps>0}$ of pointed metric-measure spaces converge to the pointed metric-measure space $(\R^n,\widehat\dd,\mathscr L^n,0)$ as $\eps \to 0$ in the \emph{pointed measure Gromov--Hausdorff topology}, see~\cite{BMR22}*{Sec.~10.3} for details.

\subsection{The curvature-dimension condition}\label{sec:cdcondition}

We end this section by recalling the definition of cur\-va\-tu\-re\--\-di\-men\-sion conditions of introduced in \cites{S06-I,S06-II,LV09}. 

On a Polish (i.e., separable and complete) metric space  $(X,\dd)$, we let $\prob(X)$ be the set of probability Borel measures on $X$ and define the \emph{Wasserstein (extended) distance $\ww_2$}
\begin{equation}
\ww_2^2(\mu,\nu)
=
\inf\set*{\int_{X\times X} \dd^2(x,y)\di\pi : \pi\in\mathsf{Plan}(\mu,\nu)}
\in[0,\infty],
\end{equation}
for $\mu,\nu\in\prob(X)$, where
\begin{equation}
\mathsf{Plan}(\mu,\nu)
=\set*{\pi\in\prob(X\times X) : (\mathrm p_1)_\#\pi=\mu,\ (\mathrm p_2)_\#\pi=\nu},
\end{equation}
where $\mathrm p_i\colon X\times X\to X$, $i=1,2$, are the projections on each component and $T_\#\mu\in\prob(Y)$ denotes the push-forward measure given by any $\mu$-measurable map $T\colon X\to X$.
The function $\ww_ 2$ is a distance on the \emph{Wasserstein space} 
\begin{equation}
\prob_2(X) = \set*{\mu\in\prob(X) : \int_X\dd^2(x,x_0)\di\mu(x)<\infty\ \text{for some, and thus any,}\ x_0\in X}.
\end{equation}
Note that $(\prob_2(X),\ww_2)$ is a Polish metric space which is geodesic as soon as $(X,\dd)$ is. 
In addition, letting $\mathrm{Geo}(X)$ be the set of geodesics of $(X,\dd)$, namely, curves $\gamma\colon[0,1]\to X$ such that $\dd(\gamma_s,\gamma_t)=|s-t|\,\dd(\gamma_0,\gamma_1)$, for all $s,t\in [0,1]$, any $W_2$-geodesic $\mu\colon[0,1]\to\prob_2(X)$ can be (possibly non-uniquely) represented as $\mu_t =(e_t)_\sharp \nu$ for some $\nu \in \prob(\mathrm{Geo}(X))$, where $e_t \colon \mathrm{Geo}(X)\to X$ is the evaluation map at time $t\in [0,1]$.

We endow the metric space $(X,\dd)$ with a non-negative Borel measure $\mm$ such that 
\begin{equation}
\mm\
\text{is finite on bounded sets and}\
\supp(\mm)=X.
\end{equation}
We define the \emph{(relative) entropy} functional $\ent_\mm\colon\prob_2(X)\to[-\infty,+\infty]$ by letting
\begin{equation}
\ent_\mm(\mu)= \int_{X} \rho\log \rho\di\mm
\end{equation}
if $\mu=\rho\mm$ and $\rho\log \rho \in \leb^1(X,\mm)$, while we set $\ent_\mm(\mu) = +\infty$ otherwise.

\begin{definition}[$\cd(K,\infty)$ property]
\label{def:cd}
We say that a metric-measure space $(X,\dd,\mm)$ satisfies the $\cd(K,\infty)$ property if, for any $\mu_0,\mu_1\in\prob_2(X)$ with $\ent_\mm(\mu_i)<+\infty$, $i=0,1$, there exists a $W_2$-geodesic $[0,1]\ni s\mapsto\mu_s\in\prob_2(X)$ joining them such that
\begin{equation}
\label{eq:cd}
\ent_\mm(\mu_s)
\le 
(1-s)\,\ent_\mm(\mu_0)+s\,\ent_\mm(\mu_1)
-
\frac K2\,s(1-s)\,\ww_2^2(\mu_0,\mu_1)
\end{equation} 
for every $s\in[0,1]$.
\end{definition}

The geodesic $K$-convexity of $\ent_\mm$ in~\eqref{eq:cd} can be reinforced to additionally encode an upper bound on the dimension on the space, as recalled below.
For $N \in (1,\infty)$, we let
\begin{equation}
S_N(\mu,\mm) = -\int_X \rho^{-1/N} \di \mu, \qquad \mu = \rho\mm + \mu^\perp,
\end{equation}
be the $N$-\emph{Rényi entropy} of $\mu\in\prob_2(X)$ with respect to $\mm$,
where $\mu = \rho\mm + \mu^\perp$ denotes the Radon--Nikodym decomposition of $\mu$ with respect to $\mm$. 

\begin{definition}[$\cd(K,N)$ property]
\label{def:cd_N}
We say that a metric-measure space $(X,\dd,\mm)$ satisfies the $\cd(K,N)$ property for some $N\in[1,\infty)$ if, for any $\mu_0,\mu_1\in\prob_2(X)$ with $\mu_i=\rho_i\mm$, $i=0,1$, there exists a $W_2$-geodesic $[0,1]\ni s\mapsto\mu_s\in\prob_2(X)$ joining them, with $\mu_s = (e_s)_\sharp \nu$ for some $\nu \in \prob(\mathrm{Geo}(X))$ such that
\begin{equation}
S_{N'}(\mu_s,\mm)
\le 
-\int_{\mathrm{Geo}(X)}\left[\tau_{K,N'}^{(1-s)}(\dd(\gamma_0,\gamma_1))\rho_0^{-1/N'}(\gamma_0) +\tau_{K,N'}^{(s)}(\dd(\gamma_0,\gamma_1))\rho_1^{-1/N'}(\gamma_1)\right]\di\nu(\gamma)
\end{equation}
for every $s\in[0,1]$, $N'\geq N$.
Here $\tau_{K,N}^{(s)}$ is the \emph{model distortion coefficient}, see \cite{S06-II}*{p.~137}.
\end{definition}

\begin{remark} The $\cd(0,N)$ corresponds to the convexity of the $N'$-Rényi entropy
\begin{equation}
S_{N'}(\mu_s,\mm)
\le (1-s) S_{N'}(\mu_0,\mm) +s S_{N'}(\mu_1,\mm),
\end{equation}
for every $s\in[0,1]$ and $N'\geq N$, with $\mu_0,\mu_1\in\prob_2(X)$ as in \cref{def:cd_N}.
\end{remark}

\begin{remark}
For a $\cd(K,N)$ metric-measure space, $K$ and $N$ represent a lower bound on the Ricci tensor and an upper bond on the dimension, respectively, and we have 
\begin{align}
\cd(K,N) \implies  \cd(K,N') &  \qquad \text{for all}\ N'\ge N,\ N,N'\in[1,\infty], \\
\cd(K,N) \implies  \cd(K',N) & \qquad \text{for all}\
K'\le K,\ K,K'\in\R.
\end{align}
In particular, the $\cd(K,\infty)$ condition \eqref{eq:cd} is the weakest of all the curvature-dimension conditions for fixed $K \in \R$.
\end{remark}

\section{Proofs}
\label{sec:proofs}

We first deal with Theorems~\ref{res:space_i} and \ref{res:commutativity}, from which \cref{res:no-BE} immediately follows.

\subsection{Proof of \texorpdfstring{\cref{res:space_i}}{Theorem 1.4}} 
\label{sec:proof_space_i}

We divide the proof in four steps.

\subsubsection*{Step 1: passing to the nilpotent approximation via blow-up}

Let $(\R^n,\widehat\vfam\,)$ be the nilpotent approximation of $(M,\vfam)$ at some fixed point $o\in M$ as explained in \cref{subsec:nilpotent_approx}. 
Let $u\in C^\infty_c(M)$ and, without loss of generality, let us assume that $\supp u$ is contained in the domain of the privileged coordinates at $o\in M$.
In particular, we identify $u$ with a $C^\infty_c$ function on $\R^n$.
We now  apply~\eqref{eq:be} to the dilated function
\begin{equation}
u_\eps = u \circ \dil_{1/\eps}\in C^\infty_c(\R^n),
\quad
\text{for}\ \eps>0,
\end{equation}  
and evaluate this expression at the point $\dil_{\eps}(x)\in\R^n$.
Exploiting the expressions in~\cref{res:formule_utili}, we get that
\begin{equation}
\label{eq:bochner_eps}
\sum_{i,j=1}^{\numb}
X_i^\eps u
\,
\big(
X_{ijj}^\eps u-X_{jji}^\eps u
\big)
-
(X_{ij}^\eps u)^2
+ 
\resto_{i,j}^\eps u
\le0,
\end{equation} 
where $X_i^\eps$ is as in~\eqref{eq:def_vec_approx} , $X_{ijk}=X_iX_jX_k$ whenever $i,j,k\in\set*{1,\dots,\numb}$, and $\resto_{i,j}^\eps$ is a  reminder locally uniformly vanishing as $\eps\to 0$.
Therefore, letting $\eps\to 0$ in~\eqref{eq:bochner_eps}, by the convergence in~\eqref{eq:def_vec_approx} we get
\begin{equation}
\label{eq:blowuppo}
\sum_{i,j=1}^{\numb}
\widehat X_i u
\,
\big(
\widehat X_{ijj} u-\widehat X_{jji} u
\big)
-
\big(\widehat X_{ij} u\big)^2
\le0,
\end{equation}
which is~\eqref{eq:be} with $K=0$ for the nilpotent approximation $(\R^n,\widehat\vfam\,)$.

\subsubsection*{Step 2: improvement via homogeneous structure}

We now show that~\eqref{eq:blowuppo} implies a stronger identity, see~\eqref{eq:blowuppo4} below, obtained from~\eqref{eq:blowuppo} by removing the squared term and replacing the inequality with an equality.
Recall, in particular, the definition of weight of (privileged) coordinates in Definition \ref{def:coord}.
We take $u\in C^\infty(\R^n)$ of the form
\begin{equation}
u = \alpha + \gamma,
\end{equation}
where $\alpha$ and $\gamma$ are homogeneous polynomial of weighted degree~$1$ and at least $3$, respectively. Since $X_I\alpha=0$ as soon as the multi-index satisfies $|I|\geq2$ (see \cite{B96}*{Prop.~4.10}), we can take the terms with lowest homogeneous degree in~\eqref{eq:blowuppo} to get
\begin{equation}\label{eq:blowuppo2}
 \sum_{i,j=1}^{\numb} \widehat{X}_i\alpha \left(\widehat{X}_{ijj}\gamma -\widehat{X}_{jji}\gamma \right) =  \sum_{i=1}^{\numb} \widehat{X}_i\alpha\,\big[\widehat{X}_i,\widehat{\Delta}\big](\gamma) \leq 0
\end{equation}
for all such~$\alpha$ and~$\gamma$. In the second equality, we used the fact that the sub-Laplacian $\widehat{\Delta}$ is a sum of squares as in \eqref{eq:laplaciannilpotent}.
Since $\alpha$ can be replaced with $-\alpha$, we must have that
\begin{equation}\label{eq:blowuppo3}
\sum_{i=1}^{\numb} \widehat{X}_i\alpha\,\big[\widehat{X}_i,\widehat{\Delta}\big](\gamma) = 0.
\end{equation}
Observing that $\widehat{X}_i\alpha$ is homogeneous of degree~$0$, and thus a constant function, we can rewrite~\eqref{eq:blowuppo3} as
\begin{equation}\label{eq:blowuppo4}
\left[\,\sum_{i=1}^{\numb} \widehat{X}_i\alpha\,\widehat{X}_i,\widehat{\Delta}\right](\gamma) =0,
\end{equation}
which is the seeked improvement of~\eqref{eq:blowuppo}.

\subsubsection*{Step 3: construction of the space $\mathfrak i\subset\mathfrak g^1$}
 
Let $\mathbb{P}_1^n$ be the vector space of homogeneous polynomials of weighted degree~$1$ on $\R^n$. 
Notice that 
\begin{equation}
\mathbb{P}_1^n = \spn\set*{z_i\mid i=1,\dots,k_1},
\quad
k_1 = \dim\distr|_0,
\end{equation}
that is, $\mathbb{P}_1^n$ is generated by the monomials given by the coordinates of lowest weight. 
We now define a linear map $ \phi \colon \mathbb{P}_1^n \to \mathfrak{g}^{1}$ by letting
\begin{equation}
\phi[\alpha] =
\widehat\nabla\alpha= 
\sum_{i=1}^{\numb} \widehat{X}_i\alpha\,\widehat{X}_i
\end{equation}
for all $\alpha\in\mathbb P_1^n$ (recall \cref{res:formule_utili}).
We claim that $\phi$ is injective. 
Indeed, if $\phi[\alpha] = 0$ for some $\alpha\in\mathbb P_1^n$, then, by applying the operator $\phi[\alpha]$ to the polynomial~$\alpha$, we get
\begin{equation}
0=\phi[\alpha](\alpha)
=
\left(
\sum_{i=1}^{\numb} \widehat{X}_i\alpha\,\widehat{X}_i
\right)
(\alpha)
=
\sum_{i=1}^{\numb} (\widehat{X}_i\alpha)^2.
\end{equation}
Thus $\widehat{X}_i\alpha=0$ for all $i=1,\dots,\numb$.
Hence $\alpha$ must have weighted degree at least $2$.
However, since $\alpha$ is homogeneous of weighted degree $1$, we conclude that $\alpha=0$, proving that $\ker\phi = \set*{0}$. 
We can thus define the subspace
\begin{equation}
\mathfrak{i}= \phi[\mathbb{P}_1^n] \subset \mathfrak{g}^{1}.
\end{equation}
By~\eqref{eq:blowuppo4}, any $\widehat{X}\in \mathfrak{i}$ is such that $[\widehat{X},\widehat{\Delta}](\gamma)=0$ for any homogeneous polynomial $\gamma$ of degree at least $3$. 
Exploiting the definitions given in Section \ref{subsec:nilpotent_approx}, we observe that a differential operator $P$, homogeneous of weighted degree $-d \in \mathbb{Z}$, has the form
\begin{equation}\label{eq:homodiff}
P = \sum_{\mu,\nu} a_{\mu,\nu} z^\mu \frac{\partial^{|\nu|}}{\partial z^{\nu}},
\end{equation}
where $\mu=(\mu_1,\dots,\mu_n)$, $\nu=(\nu_1,\dots,\nu_n)$, $\mu_i,\nu_j \in \N \cup \{0\}$, $a_{\mu,\nu}\in\R$, and the weighted degree of every addend in \eqref{eq:homodiff} is equal to $-d$, namely, $\sum_{i=1}^n (\mu_i -\nu_i)w_i=-d$.

Thus, since $\widehat{X}$ and $\widehat{\Delta}$ are homogeneous differential operators of order $-1$ and $-2$, respectively, then $[\widehat{X},\widehat{\Delta}]$ has order $-3$, see \cite{B96}*{Prop.~5.16}. It follows that $[\widehat{X},\widehat{\Delta}]=0$ as differential operator acting on $C^\infty(\R^n)$.

We now show~\eqref{eq:space_i_complement}, that is, $\mathfrak{g}^1=\mathfrak{i}\oplus \mathfrak{h}^1$.
Let us first observe that $\mathfrak{i}\cap\mathfrak{h}=\set*{0}$. 
Indeed, if $\phi[\alpha]\in \mathfrak{h}$ for some $\alpha\in\mathbb P_1^n$, that is, $\phi[\alpha]|_0 = 0$, 
then $\widehat{X}_i\alpha|_0 = 0$ for all $i=1,\dots,\numb$.
Since $\widehat{X}_i\alpha$ is a constant function, this implies $\phi[\alpha]=0$, as claimed.
Therefore, since $\dim\mathfrak{i} = \dim\mathbb{P}_1^n = k_1$, we must have $\mathfrak{g}^{1} = \mathfrak{i} \oplus \mathfrak{h}^{1}$ thanks to \cref{res:sum_space_zero} below.

\begin{lemma}
\label{res:sum_space_zero}
The subspace $\mathfrak{h}^1\subset \mathfrak{g}^1$ has codimension $k_1$.
\end{lemma}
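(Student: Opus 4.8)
The plan is to realize $\mathfrak{h}^1$ as the kernel of a single linear map and then identify its image. Consider the evaluation map $\mathrm{ev}_0 \colon \mathfrak{g}^1 \to T_0\R^n$ defined by $\widehat{X} \mapsto \widehat{X}|_0$. By the very definition of $\mathfrak{h}$ in \eqref{eq:h_def}, its kernel is exactly $\mathfrak{h}^1 = \mathfrak{h} \cap \mathfrak{g}^1$. Hence the rank-nullity theorem gives $\operatorname{codim}\mathfrak{h}^1 = \dim\mathfrak{g}^1 - \dim\mathfrak{h}^1 = \dim \mathrm{ev}_0(\mathfrak{g}^1)$, so it suffices to prove that the image $\mathrm{ev}_0(\mathfrak{g}^1) = \spn\{\widehat{X}_1|_0,\dots,\widehat{X}_\numb|_0\}$ has dimension exactly $k_1$.

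First I would establish the upper bound $\dim \mathrm{ev}_0(\mathfrak{g}^1) \le k_1$ by a homogeneity argument. Each $\widehat{X}_i$ is a vector field homogeneous of degree $-1$ with polynomial coefficients, hence a sum of terms $z^\mu \partial_{z_j}$ with $\sum_\ell \mu_\ell w_\ell - w_j = -1$. Evaluating at the origin kills every term with $\mu \neq 0$, leaving only contributions from $\partial_{z_j}$ with $w_j = 1$. Since the privileged coordinates of weight $1$ are exactly $z_1,\dots,z_{k_1}$, we conclude $\widehat{X}_i|_0 \in \spn\{\partial_{z_1}|_0,\dots,\partial_{z_{k_1}}|_0\}$, whence $\dim \mathrm{ev}_0(\mathfrak{g}^1) \le k_1$.

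For the matching lower bound I would show that $\widehat{X}_i|_0 = X_i|_0$ for every $i$, so that $\mathrm{ev}_0(\mathfrak{g}^1) = \spn\{X_1|_0,\dots,X_\numb|_0\} = \distr|_0$, which has dimension $k_1$ by definition. Concretely, using $X_i^\eps = \eps\,(\dil_{1/\eps})_* X_i$ and that $\dil_{1/\eps}$ is linear with differential $\operatorname{diag}(\eps^{-w_1},\dots,\eps^{-w_n})$, one computes $X_i^\eps|_0 = \sum_j \eps^{1-w_j} c_j^i\,\partial_{z_j}|_0$, where $X_i|_0 = \sum_j c_j^i\,\partial_{z_j}|_0$. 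In adapted coordinates $X_i|_0 \in \distr|_0 = \spn\{\partial_{z_1}|_0,\dots,\partial_{z_{k_1}}|_0\}$, so $c_j^i = 0$ whenever $w_j > 1$; all surviving exponents equal $\eps^{1-1} = 1$, giving $X_i^\eps|_0 = X_i|_0$ for all $\eps$. Letting $\eps \to 0$ and invoking the convergence in \eqref{eq:def_vec_approx} then yields $\widehat{X}_i|_0 = X_i|_0$.

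Combining the two inclusions gives $\dim \mathrm{ev}_0(\mathfrak{g}^1) = k_1$, which is the claim. The only delicate point is the identity $\widehat{X}_i|_0 = X_i|_0$: it hinges on the adaptedness of the chosen coordinates, which forces $X_i|_0$ to lie in the span of the weight-one coordinate vectors and thereby makes the dilation weights act trivially at the base point. This is precisely where the choice of privileged coordinates enters, and it is the step I would be most careful to justify, although the computation itself is elementary.
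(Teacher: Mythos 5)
Your proof is correct and is essentially the same as the paper's: both arguments reduce the claim to the identity $\operatorname{codim}\mathfrak{h}^1=\dim\mathfrak{g}^1|_0=k_1$, the paper by choosing a complement $\mathfrak{v}$ of $\mathfrak{h}^1$ and noting it evaluates injectively at the origin, you by applying rank--nullity to the evaluation map $\mathrm{ev}_0$. The only difference is that the paper simply asserts $\dim\mathfrak{g}^1|_0=k_1$ as a known property of the nilpotent approximation, whereas you verify it directly via the homogeneity of the $\widehat{X}_i$ and the identity $\widehat{X}_i|_0=X_i|_0$, which is a correct and slightly more self-contained account of the same step.
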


\begin{proof}
Let $\mathfrak{v} \subset \mathfrak{g}^1$ such that $\mathfrak{g}^1=\mathfrak{v}\oplus \mathfrak{h}^1$. 
We claim that $\dim\mathfrak{v}|_0 = \dim\mathfrak{v}$, where $\dim\mathfrak{v}|_0$ is the dimension of $\mathfrak{v}|_0$ as a subspace of $T_0\R^n$, while $\dim\mathfrak{v}$ is the dimension of $\mathfrak{v}$ as a subspace of $\mathfrak{g}$. 
Indeed, we have the trivial inequality $\dim\mathfrak{v}|_0 \leq \dim\mathfrak{v}$.
On the other hand, if strict inequality holds, then $\mathfrak{v}$ must contain non-zero vector fields vanishing at zero, contradicting the fact that $\mathfrak{v}\cap\mathfrak{h}=\set*{0}$.  Therefore, since $\dim\mathfrak{g}^{1}|_0=k_1$ and $\dim\mathfrak{h}^1|_0 =0$, we get $\dim\mathfrak v=\dim\mathfrak v|_0=k_1$,  proving the lemma.
\end{proof}

\subsubsection*{Step 4: proof of the Killing property} We have so far proved the existence of $\mathfrak{i}$ such that $\mathfrak{g}^1 = \mathfrak{i}\oplus \mathfrak{h}^1$, and such that any element $Y\in \mathfrak{i}$ commutes with the sub-Laplacian $\widehat{\Delta}$. We now show that all such $Y$ is a Killing vector field.

Let $Y\in\mathfrak{i}$. Since $[Y,\widehat{\Delta}]=0$, the induced flow $\phi_s^Y$, for $s\in \R$, commutes with $\widehat{\Delta}$ when acting on smooth functions, that is,
\begin{equation}\label{eq:commutflow}
\widehat\Delta(u \circ \phi_s^Y)
=
(\widehat\Delta u) \circ \phi_s^Y
\end{equation}
for all $u\in C^\infty(\R^n)$ and $s\in\R$. Recall the sub-Riemannian Hamiltonian $\widehat{H}:T^*\R^n \to \R$,
\begin{equation}\label{eq:hamhat}
\widehat{H}(\lambda) = \frac{1}{2}\sum_{i=1}^{\numb} \langle \lambda, \widehat{X}_i\rangle^2,
\end{equation}
for all $\lambda \in T^*\R^n$. By \eqref{eq:laplaciannilpotent}, $\widehat{H}$ is the principal symbol of  $\widehat{\Delta}$. Thus, from \eqref{eq:commutflow} it follows
\begin{equation}
\widehat{H}\circ \left(\phi_s^Y\right)^* = \widehat{H},
\end{equation}
for all $s\in \R$, where the star denotes the pull-back, and thus $\left(\phi_s^Y\right)^*$ is a diffeomorphism on $T^*\R^n$. This means that $\phi_s^Y$ is an isometry, as we now show. Indeed, for any given $x\in\R^n$, the restriction $\widehat{H}|_{T^*_x\R^n}$ is a quadratic form on $T^*_x\R^n$, so $(\phi_s^Y)^*$ must preserve its kernel, that is,
\begin{equation}
\label{eq:dual_annik}
(\phi_s^Y)^* \ker \widehat{H}|_{T_{\phi_s^Y(x)}^*\R^n} = \ker \widehat{H}|_{T_{x}^*\R^n}
\end{equation}
for all $x\in\R^n$.
By \eqref{eq:hamhat}, it holds $\ker\widehat{H}|_{T_x^*\R^n} = \widehat{\distr}^\perp_x$, where $\perp$ denotes the annihilator of a vector space. By duality, from~\eqref{eq:dual_annik}  we obtain that $(\phi_s^Y)_*\widehat\distr_x=\widehat\distr_{\phi_s^Y(x)}$ for all $x\in\R^n$ as required by \eqref{eq:point-dist-pres}. Finally, for $\lambda\in T_x^*M$, let $\lambda^\#\in\distr_x$ be uniquely defined by $\g_x(\lambda^\#,V)=\scalar*{\lambda,V}_x$ for all $V\in\distr_x$, and notice that the map $\lambda \mapsto \lambda^\#$ is surjective on $\distr_x$. Then it holds $\|\lambda^\#\|^2_x = 2\widehat{H}(\lambda)$, see \cref{res:andrei}. Thus, since $(\phi_s^Y)^*$ preserves $\widehat{H}$, the map $(\phi_s^Y)_*$ preserves the sub-Riemannian norm, and thus $\g$. This means that $\phi_s^Y$ is an isometry, concluding the proof of \cref{res:space_i}. \hfill $\qed$

\subsection{Proof of \texorpdfstring{\cref{res:commutativity}}{Theorem 1.5}}
\label{sec:proof_commutativity}

We claim that
\begin{equation}
\label{eq:gjgj=hj}
\mathfrak{g}^j= \mathfrak{h}^j
\quad
\text{for all}\
j\geq 2.
\end{equation}
Note that~\eqref{eq:gjgj=hj} is enough to conclude the proof of \cref{res:commutativity}, since, from~\eqref{eq:gjgj=hj} combined with \eqref{eq:ggen} and~\eqref{eq:hgen}, we immediately get that
\begin{equation}
\mathfrak{g} =  \mathfrak{g}^1 \oplus \mathfrak{h}^2 \oplus \dots \oplus \mathfrak{h}^s.
\end{equation}
In particular, we deduce that $\mathfrak{g}|_0 = \mathfrak{g}^1|_0$, which in turn implies that $\mathfrak{g}$ must be commutative, otherwise the bracket-generating condition would fail.
To prove~\eqref{eq:gjgj=hj}, we proceed by induction on $j\ge2$ as follows.

\subsubsection*{Proof of the base case \texorpdfstring{$j=2$}{j=2}}

We begin by proving the base case $j=2$ in~\eqref{eq:gjgj=hj}.
To this aim, let $\widehat{X} \in \mathfrak{i}$ and $\widehat{Y}\in \mathfrak{g}^1$. 
By definition of Lie bracket, we can write
\begin{equation}
\big(\phi_{-s}^{\widehat X}\big)_* \widehat{Y} = s\,\big[\widehat{X},\widehat{Y}\big] + o(s)
\quad
\text{as}\ s\to 0,
\end{equation}
where $\phi_s^{\widehat X}$, for $s\in\R$, is the flow of $\widehat X$.
Since $\mathfrak{g}^1|_x = \widehat{\distr}|_x$ for all $x\in \R^n$, and since $\widehat{X}$ is Killing (in particular \eqref{eq:point-dist-pres} holds for its flow), we have that $[\widehat{X},\widehat{Y}]|_x \in \widehat{\distr}|_x$ for all $x\in \R^n$. 
Since $[\widehat{X},\widehat{Y}]\in\mathfrak{g}^2$ and so, in particular, $[\widehat{X},\widehat{Y}]$ is homogeneous of degree $-2$, we have
\begin{equation}
[\widehat{X},\widehat{Y}]|_0 = \sum_{j\,:\,w_j=2} a_j \,\partial_{z_j}|_0,
\end{equation}
for some constants $a_j\in \R$. But we also must have that $[\widehat{X},\widehat{Y}]|_0 \in \widehat{\distr}|_0$ and so, since 
\begin{equation}
\widehat{\distr}|_0 = \spn \set*{\partial_{z_j}: w_j = 1}
\end{equation}
according to \cref{def:coord}, $[\widehat{X},\widehat{Y}]|_0=0$, that is, $[\widehat{X},\widehat{Y}] \in \mathfrak{h}$. 
We thus have proved that
$[\mathfrak{i},\mathfrak{g}^1] \subset \mathfrak{h}^2$.
In particular, since $\mathfrak{g}^1 = \mathfrak{i} \oplus \mathfrak{h}^1$, we get 
\begin{equation}
\label{eq:abba}
[\mathfrak i,\mathfrak i]\subset\mathfrak h^2
\quad
\text{and}
\quad
[\mathfrak i,\mathfrak h^1]\subset\mathfrak h^2,
\end{equation}
from which we readily deduce~\eqref{eq:gjgj=hj} for $j=2$.

\subsubsection*{Proof of the induction step}
Let us assume that~\eqref{eq:gjgj=hj} holds for some $j\in\N$, $j\ge2$.
Since $\mathfrak g^1=\mathfrak i\oplus\mathfrak h^1$, by the induction hypothesis we can write
\begin{equation}
\mathfrak g^{j+1}
=
[\mathfrak g^1,\mathfrak g^j]
=
[\mathfrak g^1,\mathfrak h^j]
=
[\mathfrak i,\mathfrak h^j]
+
[\mathfrak h^1,\mathfrak h^j]
=
[\mathfrak i,\mathfrak h^j]
+
\mathfrak h^{j+1}.
\end{equation}
We thus just need to show that $[\mathfrak i,\mathfrak h^j]\subset\mathfrak h^{j+1}$ for all $j\in\N$ with $j\ge2$. 
Note that we actually already proved the case $j=1$ in~\eqref{eq:abba}.
Again arguing by induction (taking $j=1$ as base case), by the Jacobi identity and~\eqref{eq:abba} we have
\begin{equation}
[\mathfrak i,\mathfrak h^{j+1}]
=
[\mathfrak i,[\mathfrak h^1,\mathfrak h^j]]
=
[\mathfrak h^1,[\mathfrak h^j,\mathfrak i]]
+
[\mathfrak h^j,[\mathfrak i,\mathfrak h^1]]
\subset
[\mathfrak h^1,\mathfrak h^{j+1}]
+
[\mathfrak h^j,\mathfrak h^2]
=
\mathfrak h^{j+2}
\end{equation}
as desired, concluding the proof of the induction step. \hfill $\qed$

\begin{remark}[Proof of \cref{res:commutativity} in the case $\mathfrak{h}=\set*{0}$]
The proof of \cref{res:commutativity} is much simpler if the nilpotent approximation $(\R^n,\widehat\vfam)$ is a Carnot group, i.e., $\mathfrak{h}=\set*{0}$. 
Indeed, in this case, the base case $j=2$ in~\eqref{eq:gjgj=hj} immediately implies that $\mathfrak{g}^2=\mathfrak h^2=\set*{0}$, which in turn gives $\mathfrak{g}=\mathfrak{g}^1$, so that $\mathfrak g$ is commutative.
\end{remark}

\subsection{Proof of \texorpdfstring{\cref{res:R}}{Theorem 1.8}}
\label{sec:proof_R}

In the following, we assume that the reader is familiar with the notions of \emph{upper gradient} and of \emph{$q$-upper gradient}, see~\cite{AGS13} for the precise definitions. 
The next two lemmas are proved in \cite{HK00} for sub-Riemannians structures on $\R^n$ equipped with the Lebesgue measure, and are immediately extended to the weighted case.

\begin{lemma}
\label{res:ug}
Let $(M,\dd,\mm)$ be as in \cref{res:R}. 
If $u \in C(M)$ and $0\leq g \in \leb^1_{\mathrm{loc}}(M,\mm)$ be an upper gradient of $u$, then $u\in\sobh^{1,1}_{\rm loc}(M,\mm)$ with $\|\nabla u\|\le g$ $\mm$-a.e. In particular, if $u \in \Lip(M,\dd)$, then $\|\nabla u\| \leq \Lip(u)$.
\end{lemma}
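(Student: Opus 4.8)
The plan is to transfer the Euclidean ``absolute continuity on lines'' (ACL) characterization of Sobolev functions, as developed by Hajłasz--Koskela~\cite{HK00}, along the flow lines of the generating vector fields. Since $\mm=\rho\,\mathscr{L}^n$ with $\rho$ smooth and strictly positive, locally one has $\leb^1_{\mathrm{loc}}(M,\mm)=\leb^1_{\mathrm{loc}}(M,\mathscr{L}^n)$ and the two measures share the same null sets, so the Fubini and ACL arguments of~\cite{HK00}, originally phrased for the Lebesgue measure, apply verbatim; this is the only point where the weighted nature of $\mm$ enters, which is what ``immediately extends'' means here.

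First I would fix a vector $a\in\R^{\numb}$ with $|a|\le1$ and consider the smooth vector field $V_a=\sum_{i=1}^{\numb} a_i X_i\in\Gamma(\distr)$. Its integral curves $\gamma$ are admissible, with constant control $a$ and hence speed $\|\dot\gamma\|_{\gamma}=\|V_a|_{\gamma}\|_{\gamma}\le|a|\le1$. Straightening $V_a$ in flow-box coordinates on the open set where $V_a\neq0$ and using Fubini, for a.e.\ integral curve one has $\int_\gamma g<\infty$; the upper gradient inequality $|u(\gamma(t))-u(\gamma(s))|\le\int_s^t g(\gamma(\tau))\,\|\dot\gamma(\tau)\|_{\gamma(\tau)}\,\di\tau$ then forces $t\mapsto u(\gamma(t))$ to be absolutely continuous with $\big|\tfrac{\di}{\di t}u(\gamma(t))\big|\le g(\gamma(t))$ for a.e.\ $t$. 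By the ACL criterion, $u$ admits the distributional directional derivative $V_a u=\sum_{i=1}^{\numb} a_i X_i u\in\leb^1_{\mathrm{loc}}(M,\mm)$ and
\begin{equation}
\Big|\sum_{i=1}^{\numb} a_i\,X_i u\Big|\le g\qquad\text{$\mm$-a.e.\ on }M.
\end{equation}
Taking $a=e_i$ for each $i=1,\dots,\numb$ yields $X_i u\in\leb^1_{\mathrm{loc}}(M,\mm)$, hence $u\in\sobh^{1,1}_{\mathrm{loc}}(M,\mm)$.

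To upgrade these per-direction bounds to the sharp estimate $\|\nabla u\|\le g$, I would choose a countable dense set $\{a^{(k)}\}_{k\in\N}$ in the unit sphere of $\R^{\numb}$. Discarding the corresponding countable union of null sets, for $\mm$-a.e.\ $x$ one has $\big|\sum_i a^{(k)}_i X_i u(x)\big|\le g(x)$ for every $k$; since the left-hand side is continuous in $a$ at fixed $x$, the inequality persists for every unit vector $a$. Optimizing over $a$ and invoking the representation $\|\nabla u\|^2=\sum_{i=1}^{\numb}(X_i u)^2$ from \cref{res:formule_utili}, which gives $\sup_{|a|=1}\big|\sum_i a_i X_i u(x)\big|=\|\nabla u(x)\|$, one concludes $\|\nabla u\|\le g$ $\mm$-a.e.

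Finally, if $u\in\Lip(M,\dd)$, then the constant $g\equiv\Lip(u)$ is an upper gradient, since $|u(\gamma(0))-u(\gamma(1))|\le\Lip(u)\,\dd(\gamma(0),\gamma(1))\le\Lip(u)\,\mathrm{length}(\gamma)$ for every admissible curve, and the previous step gives $\|\nabla u\|\le\Lip(u)$. The main technical obstacle I anticipate is precisely the passage from the naive per-generator bound $|X_i u|\le g$---which alone would only yield $\|\nabla u\|\le\sqrt{\numb}\,g$---to the sharp constant; this is resolved by testing all horizontal directions simultaneously and exploiting the identity $\|\nabla u\|^2=\sum_i(X_i u)^2$ rather than the individual derivatives, together with the measure-theoretic bookkeeping needed to keep a single full-measure set valid across all directions.
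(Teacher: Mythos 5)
Your proof is correct and, at the level of strategy, coincides with the paper's: both reduce to a bounded coordinate chart where $\mm=\theta\mathscr{L}^n$ with $\theta$ smooth and pinched between positive constants, observe that upper gradients and the relevant $\leb^1_{\mathrm{loc}}$ classes are unaffected by this change of measure, and then appeal to the Euclidean/Carnot--Carath\'eodory theory of Haj\l asz--Koskela. The difference is one of granularity. The paper cites \cite{HK00}*{Thm.~11.7} as a black box --- which already delivers the sharp bound $\|\nabla u\|\le g$ --- and spends its effort on the step you compress into ``apply verbatim'', namely the adjoint computation with test functions $v=\theta w$ showing that the distributional $X_i$-derivatives with respect to $\theta\mathscr{L}^n$ and $\mathscr{L}^n$ are the same function, so that $\sobh^{1,q}(\Omega,\theta\mathscr{L}^n)=\sobh^{1,q}(\Omega,\mathscr{L}^n)$ with equivalent norms. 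You instead unpack the black box: the flow-box/ACL argument along integral curves of $V_a=\sum_i a_iX_i$, together with the upgrade from per-direction bounds to the sharp constant via a countable dense set of directions and the identity $\sup_{|a|=1}\big|\sum_i a_iX_iu\big|=\|\nabla u\|$ from \cref{res:formule_utili}, is essentially the proof of the cited theorem, and your diagnosis of where the factor $\sqrt{\numb}$ would otherwise creep in is exactly right. Two points to tighten if you were to write this in full: the flow-box construction only controls $V_au$ on the open set where $V_a\neq0$, and in the rank-varying setting the zero set of $V_a$ need not be negligible, so it must be treated separately (on its interior $V_au=0$ trivially; the residual boundary set requires an extra argument); and the identification of the $\mm$-distributional and Lebesgue-distributional derivatives deserves the explicit one-line adjoint computation rather than ``verbatim'', since the adjoints differ by the zeroth-order term $X_i\theta/\theta$. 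Neither issue affects the validity of your argument.
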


\begin{proof}
Without loss of generality we may assume that $M=\Omega\subset\R^n$ is a bounded open set, the sub-Riemannian structure is induced by a family of smooth bracket-generating vector fields $\vfam=\set*{X_1,\dots,X_{\numb}}$ on $\Omega$ and $\mm=\theta\mathscr{L}^n$, where $\theta \colon \Omega \to [0,\infty)$ is smooth and satisfies $0<\inf_{\Omega}\theta \leq \sup_{\Omega}\theta < \infty$. 
Hence, $\leb^1(\Omega,\theta \mathscr{L}^n) = \leb^1(\Omega, \mathscr{L}^n)$ as sets, with equivalent norms, so that $0\le g \in \leb^1_{\mathrm{loc}}(\Omega,\mathscr{L}^n)$ is an upper gradient of $u \in C(\Omega)$.
Hence,    
by~\cite{HK00}*{Thm.~11.7}, we get that $u\in \sobh^{1,1}_{\rm loc}(\Omega,\mathscr L^n)$, with $\|\nabla u\|\leq g$ $\mathscr{L}^n$-a.e., and thus $\theta\mathscr{L}^n$-a.e., on~$\Omega$.
By definition of distributional derivative, we can write
\begin{equation}
\int_{\Omega} v\,  X_i u \di x = \int_{\Omega} u \left[-X_iv + \diverg(X_i) v\right]\di x, \quad \forall\,v\in C^1_c(\Omega),\ i=1,\dots,\numb,
\end{equation}
where $\diverg$ denotes the Euclidean divergence. We apply the above formula with test function $v=\theta w$, for any $w\in C^1_c(\Omega)$, getting
\begin{equation}
\int_{\Omega} w \, X_i u \,\theta \di x = \int_{\Omega} u \left[-X_iw + \diverg(X_i) w+ \frac{X_i\theta}{\theta}w\right]\theta \di x, \quad \forall\,w\in C^1_c(\Omega),\ i=1,\dots,\numb.
\end{equation}
The function within square brackets is the adjoint $X_i^*w$ with respect to the measure $\theta \mathscr{L}^n$. 
It follows that $\sobh^{1,q}(\Omega,\theta\mathscr{L}^n)=\sobh^{1,q}(\Omega,\mathscr{L}^n)$ as sets, with equivalent norms. 
In particular, $u\in \sob^{1,1}_{\distr,\rm loc}(\Omega,\theta \mathscr L^n)$ as desired.
\end{proof}

\begin{lemma}[Meyers–Serrin]
\label{res:ms}
Let $(M,\dd,\mm)$ be as in \cref{res:R} and let $q\in[1,\infty)$. 
Then $\sobh^{1,q}(M,\mm)\cap C^\infty(M)$ is dense in $\sobh^{1,q}(M,\mm)$.
\end{lemma}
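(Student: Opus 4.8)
The plan is to deduce the weighted statement from the corresponding density result for the Lebesgue measure via a partition-of-unity localization. First I would fix a locally finite countable atlas $\{U_k\}$ of coordinate charts on $M$, each $U_k$ identified with a bounded open subset of $\R^n$ on which $\vfam$ is realized by smooth bracket-generating vector fields and $\mm = \rho\,\mathscr{L}^n$ with $\rho$ smooth and strictly positive, together with a subordinate smooth partition of unity $\{\psi_k\}$, so that $\sum_k \psi_k \equiv 1$ and $\supp\psi_k \Subset U_k$. Given $u \in \sobh^{1,q}(M,\mm)$, the Leibniz rule $X_i(\psi_k u) = \psi_k\,X_i u + u\,X_i\psi_k$ for the distributional derivatives shows that each $u_k := \psi_k u$ belongs to $\sobh^{1,q}(M,\mm)$ and is supported in the fixed compact set $\supp\psi_k$.

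On a relatively compact neighborhood $\Omega_k \Subset U_k$ of $\supp\psi_k$ the density satisfies $0 < \inf_{\Omega_k}\rho \le \sup_{\Omega_k}\rho < \infty$, so the very argument in the proof of \cref{res:ug} gives $\sobh^{1,q}(\Omega_k, \rho\,\mathscr{L}^n) = \sobh^{1,q}(\Omega_k, \mathscr{L}^n)$ as sets and with equivalent norms. I would then invoke the unweighted Meyers–Serrin theorem for sub-Riemannian structures on $\R^n$ from \cite{HK00}: there exist smooth functions $v_k \in C^\infty_c(U_k)$ approximating $u_k$ arbitrarily well in the $\sobh^{1,q}(\Omega_k, \mathscr{L}^n)$-norm, hence, by the norm equivalence, in the $\sobh^{1,q}(M,\mm)$-norm. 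Concretely, these $v_k$ arise as mollifications $u_k * \eta_\delta$, and the only nontrivial point in the unweighted step is the Friedrichs-type commutator estimate ensuring that $X_i(u_k * \eta_\delta) - (X_i u_k) * \eta_\delta \to 0$ in $\leb^q$ as $\delta \to 0$, which holds precisely because the coefficients of the $X_i$ are smooth; this is the analytic content delegated to \cite{HK00}.

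Finally I would reassemble: choosing $v_k$ with $\|u_k - v_k\|_{\sobh^{1,q}(M,\mm)} < \eps\,2^{-k}$ and setting $v = \sum_k v_k$, the local finiteness of the cover makes $v$ a locally finite sum of smooth functions, hence $v \in C^\infty(M)$, and the triangle inequality yields $\|u - v\|_{\sobh^{1,q}(M,\mm)} \le \sum_k \|u_k - v_k\|_{\sobh^{1,q}(M,\mm)} < \eps$, so that $v \in \sobh^{1,q}(M,\mm) \cap C^\infty(M)$ is the required approximant. The main obstacle is the analytic heart of the unweighted case, namely the Friedrichs commutator lemma accommodating the variable (smooth) coefficients of the horizontal vector fields, which is exactly what prevents a naive mollification from commuting with the $X_i$; in the present weighted setting, however, this is already covered by \cite{HK00}, and the residual work is the bookkeeping of the localization and the reduction to the Lebesgue density carried out above.
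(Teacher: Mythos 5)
Your proposal is correct and follows essentially the same route as the paper: the paper's proof likewise reduces, via a partition of unity and exhaustion, to a bounded coordinate chart where the weighted and unweighted spaces $\sobh^{1,q}(\Omega,\theta\mathscr{L}^n)$ and $\sobh^{1,q}(\Omega,\mathscr{L}^n)$ coincide with equivalent norms, and then invokes the unweighted Meyers--Serrin theorem of \cite{HK00}*{Thm.~11.9}. You have merely spelled out the localization and reassembly details that the paper leaves implicit.
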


\begin{proof}
Up to a partition of unity and exhaustion argument, we can reduce to the case $M=\Omega \subset \R^n$ is a bounded open set and $\mm = \theta \mathscr{L}^n$, where $\theta\colon \Omega \to [0,\infty)$ is as in the previous proof, so that $\sobh^{1,q}(\Omega,\mathscr{L}^n)= \sobh^{1,q}(\Omega,\theta\mathscr{L}^n)$ as sets, with equivalent norms. In particular, we can assume that $\theta\equiv 1$. This case is proved in~\cite{HK00}*{Thm.~11.9}.
\end{proof}

\begin{lemma}
\label{res:wug}
Let $(M,\dd,\mm)$ be as in \cref{res:R} and let $q\in[1,\infty)$.
If $u\in\sobh^{1,q}(M,\mm)$, then $\|\nabla u\|$ is the minimal $q$-upper gradient of $u$.
\end{lemma}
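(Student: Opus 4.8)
The plan is to prove the two inequalities $|\slope u|_{w,q}\le\|\nabla u\|$ and $\|\nabla u\|\le|\slope u|_{w,q}$ $\mm$-a.e.\ separately: for the first I show that $\|\nabla u\|$ is itself a $q$-weak upper gradient of $u$, so minimality of $|\slope u|_{w,q}$ forces the inequality; for the second I apply \cref{res:ug} to a Lipschitz approximation. I would begin with smooth $u$. If $\gamma$ is an admissible curve, the chain rule $(u\circ\gamma)'(t)=\g(\nabla u,\dot\gamma(t))$ together with the Cauchy–Schwarz inequality for $\g$ gives $|u(\gamma(1))-u(\gamma(0))|\le\int_0^1\|\nabla u\|_{\gamma(t)}\,\|\dot\gamma(t)\|_{\gamma(t)}\,\di t$. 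Since $(M,\dd)$ is a length space in which every rectifiable curve is, up to reparametrization, an admissible one with metric speed $\|\dot\gamma\|$, this is exactly the upper gradient inequality; hence $\|\nabla u\|$ is a genuine upper gradient, and in particular a $q$-weak upper gradient, of every $u\in C^\infty(M)$.

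Next I would pass to a general $u\in\sobh^{1,q}(M,\mm)$ via the Meyers–Serrin property (\cref{res:ms}): choose $u_k\in\sobh^{1,q}(M,\mm)\cap C^\infty(M)$ with $u_k\to u$ in $\sobh^{1,q}(M,\mm)$. Then $u_k\to u$ in $\leb^q(M,\mm)$ and, from $\big|\,\|\nabla u_k\|-\|\nabla u\|\,\big|\le\|\nabla(u_k-u)\|$, also $\|\nabla u_k\|\to\|\nabla u\|$ in $\leb^q(M,\mm)$. Each $\|\nabla u_k\|$ is a $q$-weak upper gradient by the previous step, and the class of $q$-weak upper gradients is closed under $\leb^q$-convergence of both the functions and the gradients (see~\cite{AGS13}). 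Therefore $\|\nabla u\|$ is a $q$-weak upper gradient of $u$, which yields $|\slope u|_{w,q}\le\|\nabla u\|$ $\mm$-a.e.\ and, in particular, shows $u\in\sob^{1,q}(M,\dd,\mm)$.

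For the reverse inequality I would invoke the \emph{relaxation} description of the minimal weak upper gradient (see~\cite{AGS13}): since $u\in\sob^{1,q}(M,\dd,\mm)$, there are Lipschitz functions $u_k\to u$ in $\leb^q(M,\mm)$ whose local Lipschitz constants $g_k$ converge to $|\slope u|_{w,q}$ in $\leb^q(M,\mm)$. Each $g_k$ is an upper gradient of the continuous function $u_k$, so \cref{res:ug} gives $\|\nabla u_k\|\le g_k$ $\mm$-a.e. This bound makes $\{\nabla u_k\}_k$ bounded in $\leb^q$; for $q\in(1,\infty)$, reflexivity and the distributional definition of the $X_i$-derivatives (tested against $C^\infty_c$ functions, using $u_k\to u$ in $\leb^q$) give $X_iu_k\rightharpoonup X_iu$ weakly in $\leb^q(M,\mm)$. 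Then, for every bounded Borel set $A$, weak lower semicontinuity of the convex functional $w\mapsto\int_A\|w\|\,\di\mm$ yields $\int_A\|\nabla u\|\,\di\mm\le\liminf_k\int_A\|\nabla u_k\|\,\di\mm\le\lim_k\int_A g_k\,\di\mm=\int_A|\slope u|_{w,q}\,\di\mm$, and the arbitrariness of $A$ forces $\|\nabla u\|\le|\slope u|_{w,q}$ $\mm$-a.e.\ Combined with the first step this proves the claim.

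The main obstacle is precisely this reverse inequality: unlike $\|\nabla u\|$, the minimal weak upper gradient need not be a genuine upper gradient, so \cref{res:ug} cannot be applied to $u$ directly and one must approximate. The essential input is thus the relaxation theorem providing Lipschitz approximants whose slopes converge to $|\slope u|_{w,q}$ in $\leb^q$; once this is granted, \cref{res:ug} and lower semicontinuity close the argument. The only remaining point of care is the endpoint $q=1$, where reflexivity is unavailable: there one either restricts to the range $q\in(1,\infty)$ relevant to \cref{res:R} or replaces weak $\leb^1$-compactness by an equi-integrability and truncation argument.
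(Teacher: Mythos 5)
Your first half---the smooth case via the chain rule and Cauchy--Schwarz, then Meyers--Serrin plus stability of $q$-weak upper gradients under $\leb^q$-convergence---is essentially the paper's argument (the paper passes to the limit along curves outside a $q$-exceptional family via Fuglede's lemma rather than quoting the abstract stability theorem, but this is the same mechanism). The minimality half is where you genuinely diverge. The paper fixes an arbitrary $q$-upper gradient $g$ of $u$ and, following Haj\l asz, approximates it \emph{from above} by genuine upper gradients $g_k\ge g$ of $u$ itself with $g_k\to g$ in $\leb^q$; then \cref{res:ug} gives $\|\nabla u\|\le g_k$ $\mm$-a.e.\ directly, and one passes to the limit. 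You instead approximate $u$ in energy by Lipschitz functions, apply \cref{res:ug} to the approximants, and recover $\|\nabla u\|\le|\slope u|_{w,q}$ by weak $\leb^q$-compactness and lower semicontinuity of the convex functional $w\mapsto\int_A\|w\|\di\mm$. Your route is correct for $q\in(1,\infty)$ and is in fact the same computation the paper performs later in the proof of \cref{res:R}~\eqref{item:sob=sob}; its cost is that it requires the (nontrivial) density-in-energy theorem of \cite{AGS13} together with reflexivity of $\leb^q$, whereas the paper's route needs neither.

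The one genuine shortfall is the endpoint $q=1$, which the statement of \cref{res:wug} includes. You flag it, but the two fixes you suggest are not equally available: the compactness issue can be repaired (the domination $\|\nabla u_k\|\le g_k$ with $g_k\to|\slope u|_{w,q}$ strongly in $\leb^1$ yields equi-integrability, so Dunford--Pettis applies), but the relaxation theorem you start from is developed in \cite{AGS13} only for $q\in(1,\infty)$, and the theory of weak upper gradients at $q=1$ is genuinely more delicate. The paper's approximation of weak upper gradients by genuine ones sidesteps this and covers $q=1$ with no extra work. For the purposes of \cref{res:R}, where only $q\in(1,\infty)$ is used, your proof suffices as written.
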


\begin{proof}
Let us first prove that $\|\nabla u\|$ is a $q$-upper gradient of $u$. 
Indeed, by \cref{res:ms}, we can find $(u_k)_{k\in\N}\subset\sobh^{1,q}(M,\mm)\cap C^\infty(M)$ such that $u_k\to u$ in $\sobh^{1,q}(M,\mm)$ as $k\to\infty$.
It is well-known that the sub-Riemannian norm of the gradient of a smooth function is an upper gradient, see~\cite{HK00}*{Prop.~11.6}. Thus, for $u_k$ it holds
 \begin{equation}
|u_k(\gamma(1))-u_k(\gamma(0))|
\le 
\int_\gamma\|\nabla u_k\|\di s.
\end{equation}
Arguing as in~\cite{H07}*{p.~179}, using Fuglede's lemma (see~\cite{H07}*{Lem.~7.5 and Sec.~10}), we pass to the limit for $k\to \infty$ in the previous equality, outside a $q$-exceptional family of curves. This proves that any Borel representative of $\|\nabla u\|$ is a $q$-upper gradient of~$u$.

We now prove that $\|\nabla u\|$ is indeed minimal. Let $0\le g\in\leb^q(M,\mm)$ be any $q$-upper gradient of~$u$.
Arguing as in~\cite{H07}*{p.~194}, we can find a sequence $(g_k)_{k\in\N}\subset\leb^q(M,\mm)$ of upper gradients of $u$ such that $g_k\ge g$ for all $k\in\N$ and $g_k\to g$ both pointwise $\mm$-a.e.\ on~$M$ and in $\leb^q(M,\mm)$ as $k\to\infty$. By \cref{res:ug}, we thus must have that $\|\nabla u\|\le g_k$ $\mm$-a.e.\ on~$M$ for all $k\in\N$. 
Hence, passing to the limit, we conclude that $\|\nabla u\|\le g$ $\mm$-a.e.\ on~$M$ for any $q$-upper gradient $g$, concluding the proof.
\end{proof}

We are now ready to deal with the proof of \cref{res:R}.

\subsection*{Proof of \texorpdfstring{\eqref{item:sob=sob}}{(i)}}
Recall that, here, $q>1$. We begin by claiming that
\begin{equation}
\label{eq:hs_inclusion_sob}
\sob^{1,q}(M,\dd,\mm)\subset\sobh^{1,q}(M,\mm)
\end{equation}
isometrically, with $\|\nabla u\| = |\slope u|_{w,q}$.
Indeed, let $u\in\sob^{1,q}(M,\dd,\mm)$.
By a well-known approximation argument, combining~\cite{AGS13}*{Prop.~4.3, Thm.~5.3 and Thm.~7.4}, we find $(u_k)_{k\in\N}\subset\Lip(M,\dd)\cap\sob^{1,q}(M,\dd,\mm)$ such that
\begin{equation}
\label{eq:gigli_approx}
u_k\to u
\quad
\text{and}
\quad
|\slope u_k|_{w,q} \to|\slope u|_{w,q}
\quad
\text{in}\ \leb^q(M,\mm). 
\end{equation}
Since $u_k\in \Lip(M,\dd)$, by \cref{res:ug} we know that $u_k\in \sobh^{1,q}(M,\mm)$. 
Hence,  by \cref{res:wug}, $|\slope u_k|_{w,q} = \|\nabla u_k\|$, and we immediately get that
\begin{equation}
\sup_{k\in\N}
\int_M\|\nabla u_k\|^q\di\mm
<\infty.
\end{equation}
Therefore, up to passing to a subsequence, $(X_i u_k)_{k\in\N}$ is weakly convergent in $\leb^q(M,\mm)$, say $X_i u_k \rightharpoonup \alpha_i \in \leb^q(M,\mm)$, for all $i=1,\dots,\numb$. We thus get that $u\in \sobh^{1,q}(M,\mm)$ with $X_i u = \alpha_i$ and thus $\nabla u = \sum_{i=1}^{\numb} \alpha_i X_i$. By stability of $q$-upper gradients, \cite{AGS13}*{Thm.~5.3 and Thm.~7.4}, $\|\nabla u\|$ is a $q$-upper gradient of $u$.  By semi-continuity of the norm, we obtain
\begin{equation}\label{eq:liminf}
\int_M \| \nabla u\|^q \di \mm \leq \liminf_{k\to \infty} \int_M \|\nabla u_k\|^q \di \mm = \int_M |\slope u|_{w,q}^q\di\mm,
\end{equation}
where we used \eqref{eq:gigli_approx}. By definition of minimal $q$-upper gradient we thus get that $\|\nabla u\| = |\slope u|_{w,q}$ $\mm$-a.e., and the claimed inclusion in~\eqref{eq:hs_inclusion_sob} immediately follows.

We now observe that it also holds
\begin{equation}
\label{eq:smooth_ok}
\sobh^{1,q}(M,\mm)\cap C^\infty(M)
\subset
\sob^{1,q}(M,\dd,\mm),
\end{equation}
with $\|\nabla u \| = |\slope u|_{w,q}$. We just need to notice that, if $u\in C^\infty(M)$, then $\|\nabla u\|$ is an upper gradient of $u$, see~\cite{HK00}*{Prop.~11.6}.
Therefore, by \cref{res:ug}, $\|\nabla u\|$ must coincide with the minimal $q$-upper gradient of $u$, i.e., $\|\nabla u\|=|\slope u|_w$ $\mm$-a.e., and~\eqref{eq:smooth_ok} readily follows.
In view of the isometric inclusions \eqref{eq:hs_inclusion_sob} and \eqref{eq:smooth_ok}, and of the density provided by \cref{res:ms}, this concludes the proof of~\eqref{item:sob=sob}. \hfill\qed

\subsection*{Proof of \texorpdfstring{\eqref{item:bochner}}{(ii)}}

Let us assume that $(M,\dd,\mm)$ satisfies the $\cd(K,\infty)$ property for some $K\in\R$.
By the previous point~\eqref{item:sob=sob}, we know that $(M,\dd,\mm)$ satisfies the $\mathsf{RCD}(K,\infty)$ property.
Consequently, since clearly $C^\infty_c(M)\subset\sob^{1,2}(M,\dd,\mm)$ by~\eqref{eq:smooth_ok}, \cite{AGS14-m}*{Rem.~6.3} (even if the measure $\mm$ is $\sigma$-finite, see~\cite{AGMR15}*{Sec.~7} for a discussion) implies that
\begin{equation}
\frac12\int_M\Delta v\,\|\nabla u\|^2\di \mm
-
\int_M v\,\g(\nabla u,\nabla\Delta u)\di \mm
\ge 
K
\int_M v\,\|\nabla u\|^2\di \mm
\end{equation} 
for all $u,v\in C^\infty_c(M)$ with $v\ge0$ on~$M$, from which we readily deduce~\eqref{eq:be}. 
\hfill\qed

\begin{remark}\label{rmk:moregeneralmeasures}
The above proofs work for more general measures $\mm$. Namely, we can assume that, locally on any bounded coordinate neighborhood $\Omega \subset \R^n$, $\mm = \theta \mathscr{L}^n$ with $\theta \in \sob^{1,1}(\Omega,\mathscr{L}^n) \cap \leb^\infty(\Omega,\mathscr{L}^n)$. In this case, the positivity of $\mm$ corresponds to the requirement that $\theta$ is locally essentially bounded from below away from zero, in charts.
\end{remark}

\subsection{Proof of \texorpdfstring{\cref{res:fullgrushin}}{Theorem 1.10}}
\label{sec:proof_fullgrushin}

We prove the two points in the statement separately.

\subsection*{Proof of \texorpdfstring{\eqref{res:juillet}}{(i)}}

The case $p=0$ has been already considered by Juillet in~\cite{J-proceeding}.
For $p>0$, we can argue as follows.
Let $A_0=[-\ell-1,-\ell]\times[0,1]$ and $A_1=[\ell,\ell+1]\times[0,1]$ for $\ell>0$. We will shortly prove that the \emph{midpoint set}
\begin{equation}
A_{1/2}
=
\set*{q\in\R^2 : \exists\, q_0\in A_0,\ \exists\, q_1\in A_1\ \text{with}\ \dd(q,q_0)=\dd(q,q_1)=\frac12\,\dd(q_0,q_1)}
\end{equation}
satisfies 
\begin{equation}
\label{eq:mid_set}
A_{1/2}\subset[-1-\eps_\ell,1+\eps_\ell]\times[0,1]
\end{equation}
for some $\eps_\ell>0$, with $\eps_\ell\downarrow 0$ as $\ell\to\infty$.
Since $\mm_p(A_0)=\mm_p(A_1)\sim \ell^p$ as $\ell\to\infty$, we get
\begin{equation}
\sqrt{\mm_p(A_0)\,\mm_p(A_1)}>\mm_p(A_{1/2})
\end{equation}
for large $\ell>0$.
This contradicts the logarithmic Brunn--Minkowski $\bm(0,\infty)$ inequality, which is a consequence of the $\cd(0,\infty)$ condition, see~\cite{V09}*{Thm.~30.7}.

To prove~\eqref{eq:mid_set}, let $q_i\in A_i$, $q_i=(x_i,y_i)$, and let $\gamma(t)=(x(t),y(t))$, $t\in[0,1]$, be a geodesic such that $\gamma(i)=q_i$, with $i=0,1$.
We first note that
\begin{equation}
\label{eq:stay_in_strip}
\min\set*{y_0,y_1}
\le 
y(t)
\le 
\max\set*{y_0,y_1}
\quad
\text{for all}\ t\in[0,1],
\end{equation}
since any curve that violates~\eqref{eq:stay_in_strip} can be replaced by a strictly shorter one satisfying~\eqref{eq:stay_in_strip}.
In particular, we get that $A_{1/2}\subset\R\times[0,1]$.
Let us now observe that
\begin{equation}
\label{eq:stima}
|x_a-x_b|\le\dd(a,b)\le|x_a-x_b|+\frac{|y_a-y_b|}{\max\set*{|x_a|,|x_b|}}
\end{equation}
for all $a=(x_a,y_a)$ and $b=(x_b,y_b)$ with $x_a,x_b\ne0$.
Therefore, if $q=(x,y)\in A_{1/2}$, then 
\begin{equation}
|x-x_0|
\le
\dd(q,q_0)
=
\frac12\,\dd(q_0,q_1)
\le 
\ell+1+ O(1/\ell)
\end{equation}
and, similarly, $|x-x_1|\le\ell+1+O(1/\ell)$.  
Since $x_0\in [-\ell-1,-\ell]$ and $x_1\in [\ell,\ell+1]$, we deduce that $|x|\le 1+O(1/\ell)$, concluding the proof of the claimed~\eqref{eq:mid_set}.
\hfill $\qed$

\subsection*{Proof of \texorpdfstring{\eqref{res:grushinbochnerae}}{(ii)}}

Out of the negligible set $\{x=0\}$, the metric $\g$ on $\mathbb{G}_p$ given by \eqref{eq:grushin_rm} is locally Riemannian.
Recalling~\eqref{eq:grushin_vol_g} and~\eqref{eq:grushin_wmeas_p}, the $\be(K,\infty)$ inequality \eqref{eq:be} is implied by the lower bound $\mathrm{Ric}_{\infty,V}\geq K$ via Bochner's formula, where $\mathrm{Ric}_{\infty,V}$ is the $\infty$-Bakry--Émery Ricci tensor of $(\R^2,\g,e^{-V}\mathrm{vol}_{\g})$, see~\cite{V09}*{Ch.~14, Eqs.~(14.36) -- (14.51)}. 
By \cref{lem:bericci} below, we have $\mathrm{Ric}_{\infty,V}\geq 0$ for all $p\geq 1$, concluding the proof. 
\hfill $\qed$

\begin{lemma}\label{lem:bericci}
Let $p\in \R$ and $N>2$. The $N$-Bakry--Émery Ricci tensor of the Grushin metric \eqref{eq:grushin_rm}, with weighted measure $\mm_p = |x|^p\di x \di y$, for all $x\neq 0$ is
\begin{equation}
\mathrm{Ric}_{N,V} = \frac{p-1}{x^2}\g -\frac{(p+1)^2}{N-2}\frac{\di x\otimes \di x}{x^2},
\end{equation}
with the convention that $1/\infty=0$.
\end{lemma}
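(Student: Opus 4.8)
The plan is to compute directly the three ingredients entering the definition of the $N$-Bakry--Émery Ricci tensor in dimension $n=2$,
\[
\mathrm{Ric}_{N,V} = \ric + \mathrm{Hess}\,V - \frac{1}{N-2}\,\di V\otimes\di V,
\]
with the convention $1/\infty=0$, where $\ric$ is the Riemannian Ricci tensor of the metric~\eqref{eq:grushin_rm} and $V(x)=-(p+1)\log|x|$ is the weight~\eqref{eq:grushin_wmeas_p}; see~\cite{V09}*{Ch.~14}. Since both $\g$ and $V$ are invariant under $x\mapsto-x$ and all the quantities below turn out to be even in $x$, I would carry out the computation only for $x>0$, where $V=-(p+1)\log x$, and then note that the resulting formula extends unchanged to every $x\neq0$.

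First I would record the Christoffel symbols of $\g=\di x\otimes\di x+x^{-2}\,\di y\otimes\di y$. From $g_{xx}=1$, $g_{yy}=x^{-2}$, and the only nontrivial derivative $\partial_x g_{yy}=-2x^{-3}$, the sole nonzero symbols are $\Gamma^x_{yy}=x^{-3}$ and $\Gamma^y_{xy}=\Gamma^y_{yx}=-x^{-1}$. Being two-dimensional, the Riemannian Ricci tensor is determined by the Gaussian curvature $K$ through $\ric=K\g$; writing $\g=\di x^2+f^2\,\di y^2$ with $f=x^{-1}$ and using $K=-f''/f$ gives $K=-2x^{-2}$, whence $\ric=-\frac{2}{x^2}\,\g$.

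Next I would compute the Hessian via $(\mathrm{Hess}\,V)_{ij}=\partial_i\partial_j V-\Gamma^k_{ij}\partial_k V$, with $\partial_x V=-(p+1)x^{-1}$ and $\partial_y V=0$. The diagonal entries come out as $(\mathrm{Hess}\,V)_{xx}=(p+1)x^{-2}$ and $(\mathrm{Hess}\,V)_{yy}=-\Gamma^x_{yy}\,\partial_x V=(p+1)x^{-4}$, while the off-diagonal entry vanishes. The key observation is that these are exactly $\frac{p+1}{x^2}$ times $g_{xx}$ and $g_{yy}$, so the Hessian is conformal to the metric: $\mathrm{Hess}\,V=\frac{p+1}{x^2}\,\g$. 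Finally $\di V\otimes\di V=\frac{(p+1)^2}{x^2}\,\di x\otimes\di x$. Assembling the three terms yields
\[
\mathrm{Ric}_{N,V}=\Big(-\tfrac{2}{x^2}+\tfrac{p+1}{x^2}\Big)\g-\frac{(p+1)^2}{N-2}\,\frac{\di x\otimes\di x}{x^2}=\frac{p-1}{x^2}\,\g-\frac{(p+1)^2}{N-2}\,\frac{\di x\otimes\di x}{x^2},
\]
as claimed.

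I do not expect a genuine obstacle here: the whole statement is a direct curvature computation. The only points requiring care are the sign and normalization conventions (the dimension $n=2$ in the denominator $N-n$, and $1/\infty=0$, which reproduces the bound $\mathrm{Ric}_{\infty,V}\ge0\iff p\ge1$ used in the proof of~\eqref{res:grushinbochnerae}), and checking that every expression is even in $x$ so that the formula is valid on all of $\{x\neq0\}$. The one pleasant simplification that makes the final expression collapse to a multiple of $\g$ plus a single rank-one correction is that $\mathrm{Hess}\,V$ is pure trace, i.e.\ conformal to the metric.
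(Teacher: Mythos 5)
Your computation is correct and follows essentially the same route as the paper: both evaluate $\ric_\g$, $\mathrm{Hess}_\g V$ and $\di V\otimes\di V$ directly and insert them into the formula $\mathrm{Ric}_{N,V}=\ric_\g+\mathrm{Hess}_\g V-\frac{1}{N-n}\di V\otimes\di V$ with $n=2$. The only (immaterial) difference is that you work in coordinates via Christoffel symbols and the Gaussian-curvature formula $K=-f''/f$, whereas the paper computes the Levi-Civita connection in the orthonormal frame $\{X,Y\}$; the resulting identities $\ric_\g=-\tfrac{2}{x^2}\g$ and $\mathrm{Hess}_\g V=\tfrac{p+1}{x^2}\g$ coincide.
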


\begin{proof}
The $N$-Bakry--Émery Ricci tensor of a $n$-dimensional weighted Riemannian structure $(\g,e^{-V}\mathrm{vol}_g)$, for $N>n$, is given by
\begin{equation}
\label{eq:fromvillani}
\mathrm{Ric}_{N,V} = \mathrm{Ric}_{\g} +\mathrm{Hess}_{\g}V - \frac{\di V\otimes \di V}{N-n},
\end{equation}
see~\cite{V09}*{Eq.~(14.36)}.
In terms of the frame~\eqref{eq:grushin_frame}, the Levi-Civita connection is given by
\begin{equation}
\nabla_X X = \nabla_X Y=0,\quad \nabla_YX = -\frac{1}{x}Y, \quad \nabla_Y Y = \frac{1}{x}X,
\end{equation}
whenever $x\neq 0$. Recalling that, from \eqref{eq:grushin_wmeas_p}, $V(x)=-(p+1)\log|x|$, for $x\neq 0$, we obtain
\begin{equation}
\label{eq:tenso_esplicit}
\mathrm{Ric}_{\g} = -\frac{2}{x^2}\g,\quad \mathrm{Hess}_{\g}V = \frac{(p+1)}{x^2}\g, \quad \di V =-\frac{p+1}{x} \di x,
\end{equation}
whenever $x\neq 0$.
The conclusion thus follows by inserting~\eqref{eq:tenso_esplicit} into \eqref{eq:fromvillani}.\end{proof}

\subsection{Proof of \texorpdfstring{\cref{res:halfgrushinCD}}{Theorem 1.11}}
\label{sec:proof_halfgrushinCD}

The statement is a consequence of the geodesic convexity of $\mathbb{G}_p^+$ and the computation of the $N$-Bakry--Émery curvature in \cref{lem:bericci}. 
Since the proof uses quite standard arguments, we simply sketch its main steps.

The interior of $\mathbb{G}_p^+$, i.e., the open half-plane, can be regarded as a (non-complete) weighted Riemannian manifold with metric $\g$ as in \eqref{eq:grushin_rm} and weighted volume as in~\eqref{eq:grushin_wmeas_p}. 
Let $\mu_0,\mu_1\in\prob_2(\mathbb{G}_p^+)$, $\mu_0,\mu_1\ll\mm_p$, with bounded support contained in the Riemannian region $\{x>\eps\}$, for some $\varepsilon\geq 0$. 

Let $(\mu_s)_{s\in[0,1]}$ be a $W_2$-geodesic joining $\mu_0$ and $\mu_1$. 
By a well-known representation theorem (see~\cite{V09}*{Cor.~7.22}), there exists $\nu\in \prob(\mathrm{Geo(\mathbb{G}_p^+)})$, supported on the set $\Gamma=(e_0\times e_1)^{-1}(\supp\mu_0\times \supp\mu_1)$, such that $\mu_s = (e_s)_\sharp \nu$ for all $s\in[0,1]$. 
Since the set $\{x\geq \eps\}$ is a geodesically convex subset of the full Grushin plane $\mathbb{G}_p$ (by the same argument of \cite{R18-gluing}*{Prop.~5}), any $\gamma \in \Gamma$ is contained for all times in the region $\{x>0\}$.
Therefore, $\Gamma$ is a set of Riemannian geodesics contained in the weighted Riemannian structure $(\set*{x>0},\g,e^{-V}\mathrm{vol}_{\g})$.  
By \cref{lem:bericci}, we have $\mathrm{Ric}_{N,V} \geq 0$ for all $N\geq N_p$, where $N_p$ is as in~\eqref{eq:halfgrushin_Np}.
At this point, a standard argument shows that the Rényi entropy is convex along Wasserstein geodesics joining $\mu_0$ with $\mu_1$, see the proof of \cite{S06-II}*{Thm.~1.7} for example.

The extension to $\mu_0,\mu_1\in\prob_2(\mathbb G_p^+)$, with $\mu_0,\mu_1\ll \mm_p$ and compact support possibly touching the singular region $\set{x=0}$, is achieved via a standard approximation argument.
More precisely, one reduces to the previous case and  exploits the stability of optimal transport~\cite{V09}*{Thm.~28.9} and the lower semi-continuity of the Rényi entropy~\cite{V09}*{Thm.~29.20}. 

Finally, the extension to general $\mu_0,\mu_1\in\prob_2(\mathbb G_p^+)$ follows the routine argument outlined in~\cite{BS-tensor}*{Rem.~2.12}, which works when $\mu_s = (e_s)_\sharp \nu$, $s\in[0,1]$, and $\nu$ is concentrated on a set of non-branching geodesics. This proves the `if' part of the statement.

The `only if' part is also standard. The $\cd(0,N)$ condition for $N>2$ implies that, on the Riemannian region $\{x>0\}$, $\mathrm{Ric}_{N,V} \geq 0$, but this is false for $N<N_p$. 

The fact that $\mathbb{G}_{p}^+$ is infinitesimally Hilbertian follows from  \cref{rmk:inf_hilb_local}, by noting that $\mm_p$ is positive and smooth out of the closed set $\{x=0\}$, which has zero measure. An alternative proof follows from the observation that $\mathbb{G}_p^+$ is a Ricci limit, see~\cite{PW22}.
\hfill\qed

\appendix

\section{Gradient and Laplacian representations formulas}
\label{sec:formulas}

For the reader's convenience, in this appendix we provide a short proof of the representation formulas~\eqref{eq:gradient_representation} and~\eqref{eq:laplacian_representation}, in the rank-varying case.

\begin{lemma}
\label{res:andrei}
For $\lambda\in T^*M$, let $\lambda^\#\in\distr$ be uniquely defined by
\begin{equation}
\label{eq:diesis_def}
\g(\lambda^\#,V)=\scalar*{\lambda,V}
\end{equation}
for all $V\in\distr$, where $\scalar*{\cdot,\cdot}$ denotes the action of covectors on vectors. Then
\begin{equation}
\label{eq:andrei}
\|\lambda^\#\|^2
=
\sum_{i=1}^{\numb}\scalar*{\lambda^\#,X_i}^2.
\end{equation}
As a consequence, if $\lambda,\mu\in T^*M$, then
\begin{equation}
\label{eq:andrei_polar}
\g(\lambda^\#,\mu^\#)=\sum_{i=1}^{\numb}\scalar*{\lambda,X_i}\scalar*{\mu,X_i}.
\end{equation}
\end{lemma}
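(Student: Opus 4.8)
The plan is to reduce everything to the explicit representation $\lambda^\# = \sum_{i=1}^{\numb} \scalar*{\lambda, X_i}\, X_i$, after which both \eqref{eq:andrei} and \eqref{eq:andrei_polar} drop out by a one-line substitution. I would work pointwise, fixing $x\in M$ and suppressing it from the notation. The family $\vfam$ determines a surjective linear map $\sigma\colon \R^{\numb}\to\distr_x$, $\sigma(u)=\sum_{i=1}^{\numb} u_i X_i$, and the norm \eqref{eq:def_norm} is precisely the quotient norm $\|V\|=\min\set*{|u| : \sigma(u)=V}$. Minimality forces the optimal $u$ to be orthogonal to $\ker\sigma$, so $\sigma$ restricts to a linear isometry from $(\ker\sigma)^\perp$, with the Euclidean structure of $\R^{\numb}$, onto $(\distr_x,\|\cdot\|)$; equivalently, $\g(\sigma(u),\sigma(w))=u\cdot w$ whenever $u,w\in(\ker\sigma)^\perp$, where $\cdot$ is the Euclidean product on $\R^{\numb}$.

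The key step is to identify $\lambda^\#$. Set $a=(\scalar*{\lambda, X_1},\dots,\scalar*{\lambda, X_{\numb}})\in\R^{\numb}$, so that $\sigma(a)=\sum_{i=1}^{\numb}\scalar*{\lambda, X_i}\,X_i$. First I would check that $a\in(\ker\sigma)^\perp$: for every $k\in\ker\sigma$ one has $a\cdot k=\sum_i k_i\scalar*{\lambda, X_i}=\scalar*{\lambda,\sigma(k)}=0$. Then, for an arbitrary $V=\sigma(v)\in\distr_x$ with $v$ its minimal-norm representative (so $v\in(\ker\sigma)^\perp$), the isometry property gives
\[
\g(\sigma(a), V)=a\cdot v=\sum_{i=1}^{\numb} v_i\,\scalar*{\lambda, X_i}=\scalar*{\lambda,\sigma(v)}=\scalar*{\lambda, V}.
\]
Thus $\sigma(a)$ satisfies the defining relation \eqref{eq:diesis_def}, and by uniqueness $\lambda^\#=\sigma(a)=\sum_{i=1}^{\numb}\scalar*{\lambda, X_i}\,X_i$.

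From here the two identities are immediate. Taking $V=\lambda^\#$ in \eqref{eq:diesis_def} and inserting the representation,
\[
\|\lambda^\#\|^2=\g(\lambda^\#,\lambda^\#)=\scalar*{\lambda,\lambda^\#}=\sum_{i=1}^{\numb}\scalar*{\lambda, X_i}^2,
\]
which is exactly \eqref{eq:andrei}, since by \eqref{eq:diesis_def} we have $\g(\lambda^\#, X_i)=\scalar*{\lambda, X_i}$. For \eqref{eq:andrei_polar} I would apply \eqref{eq:diesis_def} to $\mu^\#$ with $V=\lambda^\#$, obtaining $\g(\lambda^\#,\mu^\#)=\scalar*{\mu,\lambda^\#}=\sum_{i=1}^{\numb}\scalar*{\lambda, X_i}\scalar*{\mu, X_i}$; alternatively this is just the polarization of \eqref{eq:andrei}, since the map $\lambda\mapsto\lambda^\#$ is linear.

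The only genuine subtlety, and the point where the rank-varying hypothesis matters, is that $\vfam$ need not be linearly independent, so $\sigma$ has a nontrivial kernel and the coefficients in $\sum_i\scalar*{\lambda, X_i}\,X_i$ are not freely prescribed. The whole argument hinges on the two facts that $\sigma$ is a genuine isometry on $(\ker\sigma)^\perp$ and that the specific vector $a$ lands in $(\ker\sigma)^\perp$; were the $X_i$ an orthonormal basis, both would be vacuous and the statement would reduce to the classical Riesz identity. I expect the main, though elementary, thing to nail down to be the isometry normalization of the quotient norm, namely that the minimizer in \eqref{eq:def_norm} is the orthogonal projection of the origin onto the affine fiber $\sigma^{-1}(V)$, hence lies in $(\ker\sigma)^\perp$.
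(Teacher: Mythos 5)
Your proof is correct, but it reaches the conclusion by a different route than the paper. The paper's proof works entirely with the dual characterization of the norm: it writes $\|\lambda^\#\|=\sup\set*{\g(\lambda^\#,X_u) : \|X_u\|=1}$, shows that the constraint $|u^*|=1$ on the minimal-norm representative can be replaced by the Euclidean constraint $|u|=1$, and then reads off \eqref{eq:andrei} as the Euclidean norm of the vector $\big(\g(\lambda^\#,X_1),\dots,\g(\lambda^\#,X_\numb)\big)$, obtaining \eqref{eq:andrei_polar} by polarization afterwards. You instead construct $\lambda^\#$ explicitly as $\sum_{i}\scalar*{\lambda,X_i}X_i$, by checking that the coefficient vector $a$ lies in $(\ker\sigma)^\perp$ and that $\sigma$ restricted to $(\ker\sigma)^\perp$ is an isometry for the inner products (not just the norms, which requires invoking the parallelogram law and polarization once). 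Both arguments ultimately rest on the same elementary fact, which you correctly isolate: the minimizer in \eqref{eq:def_norm} is the orthogonal projection onto $(\ker\sigma)^\perp$. What your route buys is that the representation formula $\nabla u=\sum_i X_iu\,X_i$ of \cref{res:formule_utili} comes out as an immediate by-product (take $\lambda=du$ in your identity $\lambda^\#=\sigma(a)$), whereas the paper derives it from \eqref{eq:andrei_polar} only in the subsequent corollary; what the paper's route buys is that it never needs to upgrade the norm isometry to an inner-product isometry. One small point you handled correctly but implicitly: the expression $\scalar*{\lambda^\#,X_i}$ in \eqref{eq:andrei} is to be read as $\g(\lambda^\#,X_i)=\scalar*{\lambda,X_i}$ via \eqref{eq:diesis_def}, which is exactly how the paper's own proof uses it.
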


\begin{proof}
Given $u\in\R^{\numb}$, we set $X_u=\sum_{i=1}^{\numb}u_iX_i$ and define
\begin{equation}
u^*\in\mathrm{argmin}\set*{v\mapsto |v| : v\in\R^{\numb},\  X_v = X_u}.
\end{equation}
In other words, for $X_u\in\distr$, $u^*$ is the element of minimal Euclidean norm such that $X_{u^*} = X_u$. 
Note that, by definition, it holds $\|X_u\| = |u^*|$. We thus have 
\begin{equation}
\|\lambda^\#\|
=
\sup\set*{\g(\lambda^\#,X) : \|X\|=1,\ X\in\distr}
=
\sup\set*{\g(\lambda^\#,X_u) : |u^*|=1,\ u\in\R^{\numb}}.
\end{equation}
We now claim that 
\begin{equation}
\label{eq:sup_uguali}
\sup\set*{\g(\lambda^\#,X_u) : |u^*|=1,\ u\in\R^{\numb}}
=
\sup\set*{\g(\lambda^\#,X_u) : |u|=1,\ u\in\R^{\numb}}.
\end{equation}
Indeed, the inequality $\leq$ in \eqref{eq:sup_uguali} is obtained by observing that $X_u=X_{u^*}$ for any $u\in \R^{\numb}$. 
To prove the inequality $\geq$ in \eqref{eq:sup_uguali}, we observe that, if $u\in\R^{\numb}$ is such that $|u|=1$ and $0<|u^*|<1$, then $v=u/|u^*|$ satisfies $|v^*|=1$ and gives
\begin{equation}\label{eq:minorazione}
\g(\lambda^\#,X_v)
>
\g(\lambda^\#,X_v)\,|u^*|
=
\g(\lambda^\#,X_u).
\end{equation}
Furthermore, if $|u|=1$  and $u^*=0$, then $X_u = 0$ so also in this case we find $v\in\R^n$ with $v^*=1$ such that \eqref{eq:minorazione} holds. 
This ends the proof of the claimed~\eqref{eq:sup_uguali}.
Hence, since 
\begin{equation}
\g(\lambda^\#,X_u)
=
\sum_{i=1}^{\numb} \g(\lambda^\#,X_i)\,u_i,
\end{equation}
we easily conclude that 
\begin{equation}
\|\lambda^\#\|
=
\sup\set*{\g(\lambda^\#,X_u) : |u|=1,\ u\in\R^{\numb}}
=
\sqrt{\sum_{i=1}^{\numb}\g(\lambda^\#,X_i)^2},
\end{equation}
proving~\eqref{eq:andrei}.
Equality~\eqref{eq:andrei_polar} then follows by polarization.
\end{proof}

\begin{corollary}
\label{res:formule_utili}
The following formulas hold:
\begin{equation}
\label{eqa:gradient_representation}
\nabla u = \sum_{i=1}^{\numb} X_iu\,X_i,
\end{equation}
\begin{equation}
\label{eqa:laplacian_representation}
\Delta u = \sum_{i=1}^{\numb} \left(X_i^2u + X_iu\,\diverg_\mm (X_i)\right),
\end{equation}
\begin{equation}
\label{eqa:gradient_prod}
\g(\nabla u,\nabla v)
=
\sum_{i=1}^{\numb}X_iu\,X_iv,
\end{equation}
for all $u,v\in C^\infty(M)$.
In particular, 
$\|\nabla u\|^2=\sum_{i=1}^{\numb}(X_iu)^2$
for all $u\in C^\infty(M)$.
\end{corollary}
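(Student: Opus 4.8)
The plan is to derive all three formulas from \cref{res:andrei}, viewing the gradient as the sharp of the differential. Indeed, the defining relation~\eqref{eq:def_gradient} says precisely that $\nabla u = (du)^\#$ in the notation of~\eqref{eq:diesis_def}, since $du(V)=\scalar*{du,V}$ for every $V\in\distr$. Once~\eqref{eqa:gradient_representation} is established, the inner-product identity~\eqref{eqa:gradient_prod} drops out of the definition of the gradient, and~\eqref{eqa:laplacian_representation} follows by integrating by parts against $\mm$.

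First I would prove~\eqref{eqa:gradient_representation}. Set $W=\sum_{i=1}^{\numb}X_iu\,X_i\in\distr$. Since the sharp map $\mu\mapsto\mu^\#$ is surjective onto $\distr_x$ at each $x$ and $\g$ is a genuine scalar product on $\distr_x$, it suffices to verify $\g(\nabla u,\mu^\#)=\g(W,\mu^\#)$ for every covector $\mu$. The polarization identity~\eqref{eq:andrei_polar}, applied with $\lambda=du$, gives
\begin{equation}
\g(\nabla u,\mu^\#)=\g\big((du)^\#,\mu^\#\big)=\sum_{i=1}^{\numb}\scalar*{du,X_i}\scalar*{\mu,X_i}=\sum_{i=1}^{\numb}X_iu\,\scalar*{\mu,X_i},
\end{equation}
while, since each $X_i\in\distr$, the defining property~\eqref{eq:diesis_def} yields $\g(X_i,\mu^\#)=\scalar*{\mu,X_i}$ and hence $\g(W,\mu^\#)=\sum_{i=1}^{\numb}X_iu\,\scalar*{\mu,X_i}$, the same quantity. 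Thus $\nabla u=W$. This is the step I expect to be the only real obstacle, and the delicate point is exactly the rank-varying case: the frame $\vfam$ need not be linearly independent, so the $X_i$ cannot be declared orthonormal and one cannot invert a Gram matrix. It is \cref{res:andrei}, built on the minimal-norm characterization~\eqref{eq:def_norm}, that makes the deceptively simple formula correct despite the redundancy of the frame.

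The inner-product identity~\eqref{eqa:gradient_prod} is then immediate. By the definition~\eqref{eq:def_gradient} of the gradient (valid since $\nabla v\in\Gamma(\distr)$) together with~\eqref{eqa:gradient_representation} and $du(X_i)=X_iu$,
\begin{equation}
\g(\nabla u,\nabla v)=du(\nabla v)=\sum_{i=1}^{\numb}X_iv\,du(X_i)=\sum_{i=1}^{\numb}X_iu\,X_iv,
\end{equation}
and taking $v=u$ gives $\|\nabla u\|^2=\sum_{i=1}^{\numb}(X_iu)^2$.

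Finally, I would obtain~\eqref{eqa:laplacian_representation} by integration by parts. Inserting~\eqref{eqa:gradient_prod} into the defining identity~\eqref{eq:def_laplacian} and applying the elementary formula $\int_M(X_i\psi)\,\di\mm=-\int_M\psi\,\diverg_\mm(X_i)\,\di\mm$ (a special case of~\eqref{eq:div_m_def}) with $\psi=v\,X_iu$, after expanding $X_i(v\,X_iu)=(X_iv)(X_iu)+v\,X_i^2u$, one finds for every $v\in C^\infty_c(M)$ that
\begin{equation}
\int_M\g(\nabla u,\nabla v)\,\di\mm=\sum_{i=1}^{\numb}\int_M X_iu\,X_iv\,\di\mm=-\int_M v\sum_{i=1}^{\numb}\big(X_i^2u+X_iu\,\diverg_\mm(X_i)\big)\di\mm.
\end{equation}
Since the right-hand integrand is smooth and $v$ is an arbitrary test function, the uniqueness built into~\eqref{eq:def_laplacian} identifies it with $\Delta u$, giving~\eqref{eqa:laplacian_representation}. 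The two final steps are routine once the gradient representation is in hand.
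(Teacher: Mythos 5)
Your proposal is correct and follows essentially the same route as the paper: the gradient representation is derived from the polarization identity \eqref{eq:andrei_polar} with $\lambda=du$ together with the surjectivity of the sharp map, and the other two formulas then follow by routine manipulations. The only cosmetic differences are that the paper obtains \eqref{eqa:gradient_prod} by applying \eqref{eq:andrei_polar} directly with $\lambda=du$, $\mu=dv$, and obtains \eqref{eqa:laplacian_representation} by writing $\Delta u=\diverg_\mm(\nabla u)$ and using the Leibniz rule for the divergence rather than unwinding the integration by parts explicitly as you do.
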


\begin{proof}
We prove each formula separately.

\smallskip

\textit{Proof of~\eqref{eqa:gradient_representation}}.
Recalling the definition in~\eqref{eq:def_gradient}, we can pick $\lambda=du$ in~\eqref{eq:andrei_polar} to get
\begin{align}
\scalar*{d u,\mu^\#}
&=
\g(\nabla u,\mu^\#)
=
\sum_{i=1}^{\numb}
\scalar*{d u,X_i}
\scalar*{\mu,X_i}
\\
&=
\sum_{i=1}^{\numb}
X_iu\,
\scalar*{\mu,X_i}
=
\g\left(\mu^\#,\sum_{i=1}^{\numb}
X_iu\,X_i
\right)
\end{align}
whenever $\mu\in T_x^*M$. 
Since the map $\#\colon T_x^*M\to\distr_x$ is surjective, we immediately get~\eqref{eqa:gradient_representation}.

\smallskip

\textit{Proof of ~\eqref{eqa:laplacian_representation}}.
Recall that
\begin{equation}
\diverg_\mm(fX)=Xf+f\diverg_\mm(X)
\end{equation}
for any $f\in C^\infty(M)$ and $X\in\Gamma(TM)$.
Hence, from the definition in~\eqref{eq:def_laplacian}, we can compute
\begin{align}
\Delta u
=
\diverg_\mm (\nabla u)
=
\sum_{i=1}^{\numb}\diverg_\mm (X_iu\,X_i)
=
\sum_{i=1}^{\numb}\left(X_i^2u+X_iu\,\diverg_\mm(X_i)\right),
\end{align}
which is the desired~\eqref{eqa:laplacian_representation}.

\smallskip

\textit{Proof of~\eqref{eqa:gradient_prod}}.
Choosing $\lambda=du$ and $\mu=dv$ in~\eqref{eq:andrei_polar}, we can compute
\begin{equation}
g(\nabla u,\nabla v)
=
\sum_{i=1}^{\numb}
\scalar*{du,X_i}\,
\scalar*{dv,X_i}
=
\sum_{i=1}^{\numb}
X_iu\,
X_iv
\end{equation}
and the proof is complete. 
\end{proof}

\bibliography{biblio.bib}

\end{document}